\documentclass{amsart}
\usepackage[dvips,final]{graphics}
\usepackage{array}
\usepackage{arydshln}
\usepackage[makeroom]{cancel}
 \usepackage[all]{xy}
 \usepackage{url}
\usepackage{multirow, blkarray}
\usepackage{booktabs}
\usepackage{textcomp}
 \usepackage[final]{epsfig}
 \usepackage{color}
\usepackage[T1]{fontenc}      
\usepackage[english,french]{babel}
\usepackage[utf8]{inputenc}
\usepackage{blindtext}

\usepackage{amsfonts,amscd,array, mathdots, epigraph}
\usepackage{amsmath}
\usepackage{amssymb}
\usepackage{amsthm}
\usepackage{mathrsfs}
\usepackage{stmaryrd}
\usepackage{slashbox}
\usepackage{diagbox}
\usepackage{enumitem}

\usepackage{ulem}
\usepackage{tikz}
\usepackage{xcolor}
\usepackage{multicol}
\usepackage{tcolorbox}
\definecolor{ufogreen}{rgb}{0.24, 0.82, 0.44}

\usepackage{hyperref}

\begin{document}


\newtheorem{theorem}{Théorème}[section]
\newtheorem{theore}{Théorème}
\newtheorem{definition}[theorem]{Définition}
\newtheorem{proposition}[theorem]{Proposition}
\newtheorem{corollary}[theorem]{Corollaire}
\newtheorem{con}{Conjecture}
\newtheorem*{remark}{Remarque}
\newtheorem*{remarks}{Remarques}
\newtheorem*{pro}{Problème}
\newtheorem*{examples}{Exemples}
\newtheorem*{example}{Exemple}
\newtheorem{lemma}[theorem]{Lemme}
\newtheorem{theorembis}[theorem]{Theorem}
\newtheorem{lemmabis}[theorem]{Lemma}

\newtcolorbox{attentionbox}{
  colback=yellow!10,
  colframe=red!70!black,
  title=Warning,
  fonttitle=\bfseries
}


\title{Une curieuse égalité entre deux sommes de produits de coefficients binomiaux}
\label{00}

\author{Flavien Mabilat}

\date{}

\keywords{binomial coefficients, continuant, Tchebychev's polynomials of second kind}

\address{
}
\def\emailaddrname{{\itshape Courriel}}
\email{flavien.mabilat@univ-reims.fr}
\subjclass[2020]{05A10, 11C99.}

\maketitle

\selectlanguage{french}

\begin{abstract}
On va montrer dans ce texte que, pour tous entiers naturels $n$ et $l$, l'égalité suivante est vérifiée : 
\[\sum_{i=0}^{l} {n-i \choose i}{l+i \choose 2i+1}=\sum_{i=0}^{l} {n-i \choose i-1}{l+i \choose 2i}.\]
\noindent On traitera d'abord le cas où $l \leq n$ pour lequel les deux sommes ne contiennent que des coefficients binomiaux classiques. Ensuite, on se placera dans le cadre général en utilisant les coefficients binomiaux généralisés.
\end{abstract}

\selectlanguage{english}
\begin{abstract}

We will show in this text that, for all non-negative integers $n$ and $l$, the following equality is verified:
\[\sum_{i=0}^{l} {n-i \choose i}{l+i \choose 2i+1}=\sum_{i=0}^{l} {n-i \choose i-1}{l+i \choose 2i}.\]
\noindent We will first address the case where $l \leq n$, for which both sums contain only classical binomial coefficients. Then, we will consider the general framework using generalized binomial coefficients.

\end{abstract}

\selectlanguage{french}

\thispagestyle{empty}

\noindent {\bf Mots clés:} coefficients binomiaux, continuant, polynômes de Tchebychev de seconde espèce.   

\section{Introduction}
\label{intro}

Les coefficients binomiaux apparaissent dans de nombreuses branches des mathématiques. Outils incontournables de la combinatoire, on les retrouve dans une myriade de résultats de dénombrement bien connus, comme par exemple le théorème de Sperner ou le théorème de Cayley (voir par exemple \cite{AZ}). Cela dit, ils se font également les compagnons de route d'objets en apparence plus éloignés des questions de comptage. Les coefficients binomiaux surgissent notamment dans l'expression des polynômes de Tchebychev, dans la démonstration de Bernstein du théorème d'approximation de Weierstrass ou bien encore dans la formule de Leibniz sur les dérivées.
\\
\\ \indent À la lueur de ce large spectre d'application, il n'est pas étonnant que de nombreuses propriétés des coefficients binomiaux aient été recherchées. Parmi celles-ci, on peut distinguer deux grandes familles : les propriétés arithmétiques et les formules de sommation. Dans la première catégorie, on peut notamment citer le théorème de de Lucas qui exprime le reste de la division du coefficient binomial ${n \choose k}$ par un nombre premier $p$ en fonction du développement en base $p$ des entiers $n$ et $k$ (voir \cite{L} section XXI). Pour ce qui est des formules sur les sommes, on dispose d'un large éventail de choix allant du triangle de Pascal au binôme de Newton en passant par un grand nombre de formules plus ou moins célèbres, telle la formule de Vandermonde (voir \cite{K} section 1.2.6 I).
\\
\\ \indent On se propose ici de s'inscrire dans cette deuxième voie, en démontrant les deux formules ci-dessous, à l'aide d'une preuve qui se veut la moins calculatoire possible. Dans tout ce qui suit, on adoptera la convention usuelle suivante : pour tout $n \geq 0$, $k \geq 1$ et $0 \leq l < k$ on pose ${n \choose -k}={l \choose k}=0$. Notons qu'avec cette convention, le triangle de Pascal fonctionne pour les coefficients binomiaux de la forme ${m \choose 0}$ et ${m \choose m}$, avec $m \geq 1$. Par ailleurs, si $x$ est un réel, on note $E[x]$ la partie entière de $x$.

\begin{theorem}
\label{11}

Pour tous entiers naturels $n$ et $0 \leq l \leq n$, on dispose de l'égalité :
\[\sum_{i=0}^{l} {n-i \choose i}{l+i \choose 2i+1}=\sum_{i=0}^{l} {n-i \choose i-1}{l+i \choose 2i}.\]

\end{theorem}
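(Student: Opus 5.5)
The plan is to fix $n$ and compare, coefficient by coefficient, two formal power series in an auxiliary variable $t$ whose coefficient of $t^l$ is the left-hand side, respectively the right-hand side, of the identity. The key point is that for $l \leq n$ every coefficient ${n-i \choose i}$ or ${n-i \choose i-1}$ that appears has $0 \leq i \leq l \leq n$, hence is an ordinary binomial coefficient. Moreover ${l+i \choose 2i+1}={l+i \choose 2i}=0$ as soon as $i>l$, since then $l+i<2i$. So we may let $i$ run over all $i \geq 0$ and $l$ over all $l \geq 0$: the coefficient of $t^l$ involves only $i \leq l$, and for those $l\leq n$ the identity to prove is exactly the equality of coefficients.

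First I will use the classical series $\sum_{l} {l+i \choose 2i+1} t^l = t^{i+1}/(1-t)^{2i+2}$ and $\sum_l {l+i \choose 2i}t^l = t^i/(1-t)^{2i+1}$. Put $y=t/(1-t)^2$ and $F_m(y)=\sum_{i} {m-i \choose i} y^i$, a polynomial (continuant / Chebyshev of the second kind up to normalisation). Then the left generating series is $yF_n(y)$. For the right one, the reindexing $j=i-1$ gives $\frac{1}{1-t}\,yF_{n-1}(y)$. The statement for $0\le l\le n$ is therefore equivalent to
\[ yF_n(y)-\frac{y}{1-t}F_{n-1}(y) = O(t^{n+1}).\]

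Next I will compute $F_m(y)$ in closed form. Pascal's rule gives $F_m=F_{m-1}+yF_{m-2}$ (with $F_0=F_1=1$), so $F_m$ is expressed through the roots of $r^2=r+y$. With $y=t/(1-t)^2$ these roots become rational in $t$: $1+4y=\left(\frac{1+t}{1-t}\right)^2$, so $r_1=\frac{1}{1-t}$ and $r_2=\frac{-t}{1-t}$. Hence
\[F_m(y)=\frac{r_1^{m+1}-r_2^{m+1}}{r_1-r_2}=\frac{1-(-t)^{m+1}}{(1+t)(1-t)^m}.\]
This is the step I expect to need the most care: checking the initial conditions and signs, and justifying the substitution $y=t/(1-t)^2$ in formal power series. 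The last point is fine since $y$ has zero constant term.

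Finally, a direct computation gives
\[(1-t)F_n-F_{n-1}=\frac{(1-(-t)^{n+1})-(1-(-t)^n)}{(1+t)(1-t)^{n-1}}=\frac{(-t)^n}{(1-t)^{n-1}}.\]
Therefore $yF_n-\frac{y}{1-t}F_{n-1}=\frac{y}{1-t}\cdot\frac{(-t)^n}{(1-t)^{n-1}}=\frac{(-1)^n t^{n+1}}{(1-t)^{n+2}}$, which is indeed $O(t^{n+1})$. Comparing the coefficients of $t^l$ for $0\le l\le n$ yields Theorem~\ref{11}. The computation also shows the defect for $l>n$, which would be relevant to the generalized case treated later.
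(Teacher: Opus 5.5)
Your proof is correct, and it takes a genuinely different route from the paper's.

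The paper reads all four binomial factors off continuants via Proposition~\ref{22}: $\binom{n-i}{i}$ and $\binom{n-1-i}{i}$ come from $K_n,K_{n-1}$, and $\binom{l+i}{2i+1}$, $\binom{l+i}{2i}$ come from $K_{2l-1},K_{2l}$. It then uses $M_n(X)M_{2l+1}(X)^{-1}=M_{n-2l-1}(X)$ (or $M_{2l+1-n}(X)^{-1}$ when $2l+1>n$). This shows that $-K_nK_{2l-1}+K_{n-1}K_{2l}$ equals $K_{n-2l-1}$ or $-K_{2l-n-1}$, which has degree less than $n+1$ since $l\le n$. Its zero coefficient of $X^{n+1}$ is, up to the sign $(-1)^{l+1}$, the difference of the two sums.

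You keep the Chebyshev structure only on the $n$-side (note $K_m(X)=X^mF_m(-X^{-2})$). You absorb the $l$-dependence into $\sum_l\binom{l+i}{2i+1}t^l=y^{i+1}$. The substitution $y=t/(1-t)^2$ makes $1+4y$ a perfect square, so $F_m=\bigl(\sum_{k=0}^{m}(-t)^k\bigr)/(1-t)^m$ and the identity becomes a one-line computation.

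What you gain is a uniform argument with no case split between $2l+1\le n$ and $2l+1>n$. You also get, for free, the exact defect $(-1)^n\binom{l}{n+1}$ for every $l$, for the sums with only classical coefficients (i.e.\ truncated at $i\le n$). This is exactly Lemma~\ref{23}, which the paper proves separately with the same matrix trick. What the paper gains is that the vanishing comes from a structural identity in $SL_2$ plus a degree count, with no power series or closed forms. That matches the author's stated aim of a minimally computational proof.
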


En réalité, la condition $l \leq n$ est inutile. Cependant, pour la lever, il faut donner un sens à ${n-i \choose i}$ lorsque $n-i<0$. On pose donc, pour $n<0$ et $k \in \mathbb{Z}$, 
\[{n \choose k}:=\left\{
    \begin{array}{ll}
        (-1)^{k}{k-n-1 \choose k} & \mbox{si } k \geq 0; \\
        0 & \mbox{sinon}.
    \end{array}
\right.  \\ \]
\noindent Cette définition peut sembler étonnante de prime abord mais elle est justifiée par les égalités ci-dessous :
\[(-1)^{k}{k-n-1 \choose k}=(-1)^{k}\frac{(k-n-1)!}{k!(-n-1)!}=(-1)^{k}\frac{(k-n-1)\ldots(-n)}{k!}=\frac{n(n-1)\ldots(n-k+1)}{k!}.\]
\noindent Par ailleurs, avec cette convention, le triangle de Pascal fonctionne pour le coefficient binomial ${0 \choose 0}$. Le triangle de Pascal est donc valide pour tous les coefficients binomiaux ${n \choose k}$ avec $0 \leq k \leq n$.
\\
\\ \indent Muni de cette définition, on peut énoncer le résultat ci-dessous, qui fournit une jolie formule qui, à notre connaissance, est inédite :

\begin{theorem}
\label{12}

Pour tous entiers naturels $n$ et $l$, on dispose de l'égalité :
\[\sum_{i=0}^{l} {n-i \choose i}{l+i \choose 2i+1}=\sum_{i=0}^{l} {n-i \choose i-1}{l+i \choose 2i}.\]

\end{theorem}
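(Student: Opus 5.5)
The plan is to deduce Théorème \ref{12} from Théorème \ref{11} by a polynomial argument in the variable $n$, with $l$ held fixed. For a fixed integer $i$, the key observation is that
\[n \longmapsto {n-i \choose i}\]
coincides, for \emph{every} integer $n$ (nonnegative or negative), with the polynomial
\[P_i(X)=\frac{(X-i)(X-i-1)\cdots(X-2i+1)}{i!}\]
evaluated at $n$. The same holds for ${n-i \choose i-1}$ with the polynomial $Q_i(X)=\frac{(X-i)\cdots(X-2i+2)}{(i-1)!}$ when $i\geq 1$, and with $Q_0=0$.

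Once this is established, both sides of the identity are polynomials in $n$ of degree at most $l$:
\[A_l(X)=\sum_{i=0}^{l}{l+i \choose 2i+1}P_i(X), \qquad B_l(X)=\sum_{i=0}^{l}{l+i \choose 2i}Q_i(X).\]
Here the coefficients ${l+i \choose 2i+1}$ and ${l+i \choose 2i}$ are classical, since $l+i\geq 0$. Théorème \ref{11} gives $A_l(n)=B_l(n)$ for every integer $n\geq l$, which is infinitely many values. Hence $A_l=B_l$ as polynomials, and evaluating at an arbitrary natural number $n$, in particular $n<l$, gives Théorème \ref{12}.

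The only real work, and what I expect to be the main obstacle (mostly a matter of careful case checking), is verifying that the conventions stated in the introduction match the polynomial $P_i$ (and likewise $Q_i$) in every regime. I would split into cases according to $m=n-i$ and $k=i$ (resp. $k=i-1$).

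\begin{enumerate}
\item $k<0$. This only occurs for $Q_0$. Both conventions give $0$: for $m\geq 0$ by the convention ${m \choose -k}=0$, and for $m<0$ by the "sinon" clause. This matches $Q_0=0$.
\item $k\geq 0$ and $m\geq k$. This is the classical formula ${m\choose k}=m(m-1)\cdots(m-k+1)/k!$.
\item $k\geq 1$ and $0\leq m<k$. The convention gives $0$, and the product $m(m-1)\cdots(m-k+1)$ also vanishes since it contains the factor $0$. The case $k=0$ gives $1$ on both sides.
\item $m<0$ and $k\geq 0$. This is exactly the computation displayed after Théorème \ref{11}: $(-1)^k{k-m-1\choose k}=m(m-1)\cdots(m-k+1)/k!$.
\end{enumerate}

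With these four cases checked, the equality of polynomials follows, which finishes the proof.
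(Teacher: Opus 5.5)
Your proof is correct, and it takes a genuinely different and much shorter route than the paper.

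\textbf{The paper's route.} For $l>n$, the paper writes $l=n+k$ and splits the difference of the two sums into two parts.
\begin{itemize}
\item The part $S_{0,n}$ with nonnegative upper indices is evaluated as $(-1)^{n+1}{l \choose n+1}$ via the continuant identity $M_{n}(X)M_{2l+1}(X)^{-1}=M_{2l+1-n}(X)^{-1}$ (Lemme~\ref{23}).
\item The part $S_{n+1,n+k}$ with negative upper indices is shown to equal $(-1)^{n}{l \choose n+1}$. This is done by checking that $v_{k,n}$ satisfies the same Pascal-type recurrence and boundary values as $u_{k,n}$ (Lemmes~\ref{24}--\ref{28}). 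That step needs a negative-index Pascal rule and an auxiliary extension of $v_{k,n}$ to $n\leq -1$.
\end{itemize}

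\textbf{Your route.} You fix $l$ and use that the paper's convention for negative upper indices is exactly the evaluation of the falling-factorial polynomial. Your case 4 is precisely the computation displayed after Théorème~\ref{11}. Since the range $0\leq i\leq l$ and the coefficients ${l+i \choose 2i+1}$, ${l+i \choose 2i}$ do not depend on $n$, both sides are polynomials in $n$. By Théorème~\ref{11} they agree at every $n\geq l$, hence everywhere. Your case check is complete, including ${n \choose -1}=0$ for $i=0$ and the vanishing factor when $0\leq m<k$.

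\textbf{What each approach buys.} Your argument:
\begin{itemize}
\item replaces Lemmes~\ref{23}--\ref{28} by a few lines;
\item gives the identity for every integer $n$, and indeed for any complex $n$ if binomials are read as falling-factorial polynomials;
\item removes the obstruction the paper mentions (that the continuant in $X$ alone cannot lift $l\leq n$), because it uses polynomiality in $n$ rather than in $X$.
\end{itemize}
In return, the paper's computation gives separate closed forms for the classical part and the negative-index part of the difference. That shows explicitly how the two pieces cancel, which your argument does not provide.
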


Cette égalité peut sembler assez curieuse de prime abord, du fait, d'une part, de l'absence de conditions sur $n$ et $l$, et, d'autre part, de sa symétrie presque parfaite. En effet, en modifiant seulement, et de façon très légère, les valeurs apparaissant en bas des coefficients binomiaux, il y avait a priori peu de chance d'obtenir une égalité. Notons que cette intuition initiale peut se retrouver en considérant des sous-sommes de chaque côté de l'égalité, c'est-à-dire les sommes de la forme $\sum_{i \in I} {n-i \choose i}{l+i \choose 2i+1}$ et $\sum_{j \in J} {n-j \choose j-1}{l+j \choose 2j}$ avec $I,J \subset [\![1;l]\!]$, puisqu'en général aucune des sous-sommes de gauche n'est égale à une des sous-sommes de droite (à l'exception évidemment des sous-sommes égales 0 ou égales à la valeur commune des sommes initiales). On donne un exemple de ce phénomène dans la section \ref{num}.

\section{Preuve de la formule}

\subsection{Le polynôme continuant}

On va donner dans cette sous-partie quelques éléments qui nous seront utiles dans la suite.
\\
\\\indent On commence par définir un polynôme célèbre, lié à l'origine aux fractions continues (voir par exemple \cite{CO} pour plus de détails). On pose $K_{-1}:=0$ et $K_{0}:=1$. Soient $n \in \mathbb{N}^{*}$ et $(a_{1},\ldots,a_{n}) \in \mathbb{C}^{n}$. On note : \[K_{n}(a_{1},\ldots,a_{n}):=
\left|
\begin{array}{cccccc}
a_1&1&&&\\[4pt]
1&a_{2}&1&&\\[4pt]
&\ddots&\ddots&\!\!\ddots&\\[4pt]
&&1&a_{n-1}&\!\!\!\!\!1\\[4pt]
&&&\!\!\!\!\!1&\!\!\!\!a_{n}
\end{array}
\right|.\] $K_{n}(a_{1},\ldots,a_{n})$ est le continuant (ou continuant négatif) de $a_{1},\ldots,a_{n}$. Ici, on aura uniquement besoin de considérer le cas où tous les $a_{i}$ sont égaux et on notera dans ce cas le continuant $K_{n}(a_{1})$ au lieu de $K_{n}(a_{1},\ldots,a_{1})$. Lorsque l'on se place dans ce cas, on obtient un polynôme d'une variable, à coefficients entiers, de degré $n$. En développant le déterminant suivant la première colonne, on obtient immédiatement, pour tout $n \geq 2$, l'égalité :
\begin{equation}
\label{p}
\tag{$\star$}
K_{n}(X)=X~K_{n-1}(X)-K_{n-2}(X).
\end{equation}
\noindent Notons que cette dernière est toujours valide si $n=1$.
\\
\\ \indent Ces polynômes peuvent être utilisés pour exprimer les coefficients de certaines matrices que l'on va expliciter. Soient $n \in \mathbb{N}$ et $x \in \mathbb{C}$. On pose $M_{n}(x):=\begin{pmatrix}
    x & -1 \\
    1  & 0 
   \end{pmatrix}^{n}$. On dispose de l'égalité classique ci-dessous :
	
\begin{proposition}
\label{21}

Soient $n \in \mathbb{N}^{*}$ et $x \in \mathbb{C}$. On a $M_{n}(x)=\begin{pmatrix}
    K_{n}(x) & -K_{n-1}(x) \\
    K_{n-1}(x)  & -K_{n-2}(x)
   \end{pmatrix}$.

\end{proposition}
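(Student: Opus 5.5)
Nous procédons par récurrence sur $n \geq 1$, en utilisant uniquement la relation de récurrence (\ref{p}) et les conventions $K_{-1}=0$, $K_{0}=1$.

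\emph{Initialisation.} Pour $n=1$, on a $M_{1}(x)=\begin{pmatrix} x & -1 \\ 1 & 0 \end{pmatrix}$. Par ailleurs, $K_{1}(x)=x$ (déterminant $1\times 1$), $K_{0}(x)=1$ et $K_{-1}(x)=0$. La matrice annoncée est donc $\begin{pmatrix} x & -1 \\ 1 & 0 \end{pmatrix}$, ce qui donne l'égalité. On note au passage que c'est précisément pour ce cas que la convention $K_{-1}=0$ est utile, et que (\ref{p}) reste valable pour $n=1$ : $K_{1}=xK_{0}-K_{-1}$.

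\emph{Hérédité.} Supposons l'égalité vraie au rang $n \geq 1$. On écrit $M_{n+1}(x)=M_{n}(x)\,M_{1}(x)$ et on calcule le produit :
\[\begin{pmatrix} K_{n}(x) & -K_{n-1}(x) \\ K_{n-1}(x) & -K_{n-2}(x) \end{pmatrix}\begin{pmatrix} x & -1 \\ 1 & 0 \end{pmatrix}=\begin{pmatrix} xK_{n}(x)-K_{n-1}(x) & -K_{n}(x) \\ xK_{n-1}(x)-K_{n-2}(x) & -K_{n-1}(x) \end{pmatrix}.\]
La relation (\ref{p}) appliquée aux rangs $n+1 \geq 2$ et $n \geq 1$ (ce dernier cas étant légitime grâce à la remarque sur $n=1$) donne $xK_{n}-K_{n-1}=K_{n+1}$ et $xK_{n-1}-K_{n-2}=K_{n}$. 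On obtient ainsi exactement la forme voulue au rang $n+1$.

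La seule subtilité est de s'assurer que (\ref{p}) est utilisée uniquement dans son domaine de validité, c'est-à-dire pour $n \geq 1$ avec $K_{-1}=0$. Le reste de la preuve relève du calcul direct.
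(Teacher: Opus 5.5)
Your proof is correct and follows the paper's route: induction on $n$, writing $M_{n+1}(x)=M_{n}(x)M_{1}(x)$ and identifying the entries via $(\star)$. You also check explicitly that $(\star)$ is only used at ranks $n+1\geq 2$ and $n\geq 1$, relying on $K_{-1}=0$ in the latter case when $n=1$; the paper leaves this detail implicit.
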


\begin{proof}

On raisonne par récurrence sur $n$. Si $n=1$ le résultat est immédiat. Supposons qu'il existe un entier $n \geq 1$ tel que la formule est vraie au rang $n$. On a :
\begin{eqnarray*}
M_{n+1}(x) &=& M_{n}(x)\begin{pmatrix}
    x & -1 \\
    1  & 0 
   \end{pmatrix} \\
	         &=& \begin{pmatrix}
    K_{n}(x) & -K_{n-1}(x) \\
    K_{n-1}(x)  & -K_{n-2}(x)
   \end{pmatrix}\begin{pmatrix}
    x & -1 \\
    1  & 0 
   \end{pmatrix}~~({\rm hypoth\grave{e}se~de~r\acute{e}currence}) \\
	         &=& \begin{pmatrix}
    xK_{n}(x)-K_{n-1}(x) & -K_{n}(x) \\
    xK_{n-1}(x)-K_{n-2}(x)  & -K_{n-1}(x)
   \end{pmatrix} \\
	         &=& \begin{pmatrix}
    K_{n+1}(x) & -K_{n}(x) \\
    K_{n}(x) & -K_{n-1}(x)
   \end{pmatrix}~~({\rm \acute{e}galit\acute{e}~(\star)}).
\end{eqnarray*}

\end{proof}

À l'instar des polynômes $K_{n}(x)$, les matrices $M_{n}(x)$ sont également des cas particuliers. On peut, bien entendu, considérer les matrices plus générales ci-dessous :
\[M_{n}(a_1,\ldots,a_n):=\begin{pmatrix}
    a_{n} & -1 \\
    1  & 0 
   \end{pmatrix}\ldots\begin{pmatrix}
    a_{1} & -1 \\
    1  & 0 
   \end{pmatrix}=\begin{pmatrix}
    K_{n}(a_{1},\ldots,a_{n}) & -K_{n-1}(a_{2},\ldots,a_{n}) \\
    K_{n-1}(a_{1},\ldots,a_{n})  & -K_{n-2}(a_{2},\ldots,a_{n-1})
   \end{pmatrix}.\]
\noindent Ces matrices ont de nombreuses applications. On peut par exemple montrer que, pour tout $A$ dans $SL_{2}(\mathbb{Z})$, il existe des entiers naturels non nuls $n,a_{1},\ldots,a_{n}$ tels que $A=M_{n}(a_{1},\ldots,a_{n})$. Cette écriture n'étant pas unique, on est amené à chercher les différentes écritures d'une matrice donnée ainsi que le nombre de telles écritures de taille fixée (voir \cite{M1,M2,O}).
\\
\\\indent On va maintenant chercher une expression sous forme de somme de $K_{n}(X)$. Pour obtenir celle-ci, on peut utiliser les polynômes de Tchebychev de seconde espèce. Ces derniers sont une famille $(U_{n})$ de polynômes de $\mathbb{Z}[X]$ vérifiant, pour tout $n \in \mathbb{N}$, $U_{n+2}(X)=2X U_{n+1}(X)-U_{n}(X)$, $U_{0}(X)=1$ et $U_{1}(X)=2X$. En utilisant l'égalité ($\star$) on constate, que $K_{n}(2X)$ vérifie ces conditions. Par conséquent, on a, pour tout $n \geq 0$, $K_{n}(X)=U_{n}\left(\frac{X}{2}\right)$. En utilisant l'expression classique des polynômes $U_{n}$, on obtient l'expression de $K_{n}(X)$ contenue dans la proposition ci-dessous. Cela dit, ici, on souhaite avoir une preuve complète du théorème \ref{11}. Aussi, on fournit une preuve détaillée de l'égalité recherchée. 

\begin{proposition}
\label{22}

Soit $n \geq 0$, $K_{n}(X)=\sum_{k=0}^{E[\frac{n}{2}]} (-1)^{k}\binom{n-k}{k}X^{n-2k}$.

\end{proposition}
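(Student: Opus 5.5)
Nous procéderons par récurrence forte sur $n$, en utilisant uniquement la relation ($\star$) et le triangle de Pascal. On note $P_{n}(X):=\sum_{k=0}^{E[\frac{n}{2}]} (-1)^{k}\binom{n-k}{k}X^{n-2k}$ et l'on montre que $P_{n}=K_{n}$ pour tout $n \geq 0$.

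\textbf{Cas initiaux.} Pour $n=0$, on a $P_{0}=\binom{0}{0}=1=K_{0}$. Pour $n=1$, on a $P_{1}=\binom{1}{0}X=X$, et ($\star$) avec $n=1$ donne $K_{1}(X)=XK_{0}-K_{-1}=X$.

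\textbf{Hérédité.} Supposons que $P_{n-1}=K_{n-1}$ et $P_{n-2}=K_{n-2}$ pour un entier $n \geq 2$. Il suffit alors de vérifier que $P_{n}=XP_{n-1}-P_{n-2}$. On écrit
\[XP_{n-1}(X)=\sum_{k \geq 0}(-1)^{k}\binom{n-1-k}{k}X^{n-2k},\]
puis, en posant $k=j+1$ dans $P_{n-2}$,
\[-P_{n-2}(X)=\sum_{j\geq 0}(-1)^{j+1}\binom{n-2-j}{j}X^{n-2-2j}=\sum_{k\geq 1}(-1)^{k}\binom{n-1-k}{k-1}X^{n-2k}.\]
Le coefficient de $X^{n-2k}$ dans $XP_{n-1}-P_{n-2}$ vaut donc $(-1)^{k}\left[\binom{n-1-k}{k}+\binom{n-1-k}{k-1}\right]$, qui est égal à $(-1)^{k}\binom{n-k}{k}$ par le triangle de Pascal. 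On conclut alors grâce à ($\star$).

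\textbf{Point délicat : les bornes de sommation.} C'est l'étape qui demande le plus de soin, car les bornes $E[\frac{n-1}{2}]$ et $E[\frac{n-2}{2}]+1$ ne coïncident pas toujours avec $E[\frac{n}{2}]$. On étend toutes les sommes à $k\geq 0$ en utilisant la convention $\binom{m}{k}=0$ pour $0 \leq m<k$ et $\binom{m}{-1}=0$. Il faut alors vérifier deux choses.
\begin{itemize}
\item Pour $k>E[\frac{n}{2}]$, tous les termes ajoutés sont nuls. En effet, on a alors $n-k<k$, donc $n-1-k<k$, et aussi $n-1-k<k-1$ dès que $n-1-k\geq 0$.
\item Pour $k=0$, le triangle de Pascal s'applique sous la forme $\binom{n-1}{0}+\binom{n-1}{-1}=\binom{n}{0}$.
\end{itemize}
Le seul cas frontière est $n=2k$. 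Le terme $\binom{k}{k}$ s'obtient alors comme $\binom{k-1}{k}+\binom{k-1}{k-1}=0+1$, ce qui est valide puisque $k-1\geq 0$. Tous les coefficients binomiaux qui interviennent sont donc à indice supérieur positif ou nul, et la convention de l'introduction suffit ; on n'a pas besoin des coefficients généralisés. Cela achève la récurrence.
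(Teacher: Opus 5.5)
Your proof is correct and follows essentially the paper's route: induction via $(\star)$, the shift $k=j+1$ in the $K_{n-2}$ sum, then Pascal's triangle. The only difference is that you handle both parities at once by padding with the zero terms allowed by the introduction's convention (Pascal at $\binom{m}{0}$ and $\binom{m}{m}$), whereas the paper writes out one parity case and calls the other analogous.

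One small tidy-up concerns your first bullet, which is superfluous and not quite right as stated. For $k\geq n$ the upper index $n-1-k$ is negative, so those coefficients are undefined classically, and under the paper's generalized definition they are nonzero (e.g. $\binom{-1}{k}=(-1)^{k}$). It suffices to observe that after the shift both sums already lie within $0\leq k\leq E[\frac{n}{2}]$, so only your two boundary checks are needed.
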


\begin{proof}

Cela se prouve par récurrence sur $n$. En effet, la  formule est vraie pour $n=0$ et pour $n=1$. Supposons qu'il existe un entier positif $n$ tel que la formule est vraie pour $n$ et $n-1$. On suppose $n$ pair. En utilisant ($\star$), on obtient :
\begin{eqnarray*}
K_{n+1}(X) &=& XK_{n}(X)-K_{n-1}(X) \\
                    &=& \sum_{k=0}^{E[\frac{n}{2}]} (-1)^{k}\binom{n-k}{k}X^{n+1-2k}-\sum_{k=0}^{E[\frac{n-1}{2}]} (-1)^{k}\binom{n-1-k}{k}X^{n-1-2k} \\
										&=&  \sum_{k=0}^{\frac{n}{2}} (-1)^{k}\binom{n-k}{k}X^{n+1-2k}-\sum_{k=0}^{\frac{n}{2}-1} (-1)^{k}\binom{n-1-k}{k}X^{n-1-2k}~({\rm car}~n~{\rm est~pair}) \\
										&=&  \sum_{k=0}^{\frac{n}{2}} (-1)^{k}\binom{n-k}{k}X^{n+1-2k}-\sum_{l=1}^{\frac{n}{2}} (-1)^{l-1}\binom{n-l}{l-1}X^{n+1-2l} \\ 
										&=&  \sum_{k=0}^{\frac{n}{2}} (-1)^{k}\binom{n-k}{k}X^{n+1-2k}+\sum_{l=1}^{\frac{n}{2}} (-1)^{l}\binom{n-l}{l-1}X^{n+1-2l} \\
										&=&  X^{n+1}+\sum_{k=1}^{\frac{n}{2}} (-1)^{k}\left(\binom{n-k}{k}+\binom{n-k}{k-1}\right)X^{n+1-2k} \\
										&=&  X^{n+1}+\sum_{k=1}^{\frac{n}{2}} (-1)^{k}\binom{n+1-k}{k}X^{n+1-2k}~({\rm triangle~de~Pascal}) \\
										&=&  \sum_{k=0}^{\frac{n}{2}} (-1)^{k}\binom{n+1-k}{k}X^{n+1-2k}. \\
\end{eqnarray*}

\noindent On procède de façon analogue si $n$ est impair. Cela prouve la formule par récurrence.

\end{proof}
  
\subsection{Preuve du premier théorème}

On va maintenant prouver l'égalité annoncée dans l'introduction.

\begin{proof}[Démonstration du théorème \ref{11}]

Si $l=0$ alors les deux sommes ne contiennent chacune qu'un seul terme valant 0. Si $n=0$ alors $l=0$ et l'égalité est vérifiée. On suppose maintenant que $n,l \geq 1$. On distingue deux cas.
\\
\\i) On suppose que $2l+1 \leq n$, c'est-à-dire $1 \leq l \leq \frac{n-1}{2}$. On pose $m:={\rm min}\left(E\left[\frac{n}{2}\right],E\left[\frac{2l-1}{2}\right]\right)$ et $r:={\rm min}\left(E\left[\frac{n-1}{2}\right],l\right)$. On a $m=l-1$ et $r=l$. Par la proposition \ref{21}, on a les deux égalités ci-dessous :
\begin{eqnarray*}
M &:=& M_{n}(X)[M_{2l+1}(X)]^{-1} \\
  &=& M_{n-2l-1}(X) \\
	&=& \begin{pmatrix}
    K_{n-2l-1}(X) & -K_{n-2l-2}(X) \\
    K_{n-2l-2}(X)  & -K_{n-2l-3}(X)
   \end{pmatrix}.
\end{eqnarray*}
\begin{eqnarray*}
M &=& M_{n}(X)[M_{2l+1}(X)]^{-1} \\
  &=& \begin{pmatrix}
    K_{n}(X) & -K_{n-1}(X) \\
    K_{n-1}(X)  & -K_{n-2}(X)
   \end{pmatrix}\begin{pmatrix}
    -K_{2l-1}(X) & K_{2l}(X) \\
    -K_{2l}(X)  & K_{2l+1}(X)
   \end{pmatrix} \\
	&=& \begin{pmatrix}
    -K_{n}(X)K_{2l-1}(X)+K_{n-1}(X)K_{2l}(X) & \ldots \\
    \ldots  & \ldots
   \end{pmatrix}.
\end{eqnarray*}

\noindent Par conséquent, on a $K_{n-2l-1}(X)=-K_{n}(X)K_{2l-1}(X)+K_{n-1}(X)K_{2l}(X)$.
\\
\\Comme $n-2l-1 \leq n$, $K_{n-2l-1}(X)$ n'a pas de terme de degré $n+1$. Ainsi, le coefficient de degré $n+1$ de $K_{n}(X)K_{2l-1}(X)$, que l'on note $u$, est égal au coefficient de degré $n+1$ de $K_{n-1}(X)K_{2l}(X)$, que l'on note $v$.
\\
\\L'entier $u$ est la somme des produits du coefficient de degré $n-2i$ de $K_{n}(X)$ et du coefficient de $K_{2l-1}(X)$ de degré $2i+1$, pour tout $0 \leq i \leq m$. Par la proposition \ref{22}, on a :
\begin{eqnarray*}
u &=& \sum_{i=0}^{m} (-1)^{i}{n-i \choose i}(-1)^{l-i-1}{2l-1-(l-i-1) \choose l-i-1} \\
  &=& (-1)^{l+1}\sum_{i=0}^{l-1} {n-i \choose i}{l+i \choose l+i-(l-i-1)} \\
	&=& (-1)^{l+1}\sum_{i=0}^{l-1} {n-i \choose i}{l+i \choose 2i+1} \\
	&=& (-1)^{l+1}\sum_{i=0}^{l} {n-i \choose i}{l+i \choose 2i+1}.
\end{eqnarray*}
\noindent L'entier $v$ est la somme des produits du coefficient de degré $n-1-2i$ de $K_{n-1}(X)$ et du coefficient de $K_{2l}(X)$ de degré $2i+2$, pour tout $0 \leq i \leq {\rm min}(r,l-1)$. Par la proposition \ref{22}, on a :
\begin{eqnarray*}
v &=& \sum_{i=0}^{{\rm min}(r,l-1)} (-1)^{i}{n-1-i \choose i}(-1)^{l-i-1}{2l-(l-i-1) \choose l-i-1} \\
  &=& (-1)^{l+1}\sum_{i=0}^{l-1} {n-(i+1) \choose i}{l+(i+1) \choose l+i+1-(l-i-1)} \\
  &=& (-1)^{l+1}\sum_{i=0}^{l-1} {n-(i+1) \choose i}{l+(i+1) \choose 2(i+1)} \\
	&=& (-1)^{l+1}\sum_{j=1}^{l} {n-j \choose j-1}{l+j \choose 2j} \\
	&=& (-1)^{l+1}\sum_{i=0}^{l} {n-i \choose i-1}{l+i \choose 2i}.
\end{eqnarray*}

\noindent Comme $u=v$, on a $(-1)^{l+1}u=(-1)^{l+1}v$ et donc $\sum_{i=0}^{l} {n-i \choose i}{l+i \choose 2i+1}=\sum_{i=0}^{l} {n-i \choose i-1}{l+i \choose 2i}$.
\\
\\ii) On suppose que $2l+1>n$, c'est-à-dire $\frac{n}{2} \leq l \leq n$. On reprend la matrice $M:=M_{n}(X)[M_{2l+1}(X)]^{-1}$. On dispose de l'égalité $M=M_{2l+1-n}(X)^{-1}=\begin{pmatrix}
    -K_{2l-n-1}(X) & K_{2l-n}(X) \\
    -K_{2l-n}(X)  & K_{2l+1-n}(X)
   \end{pmatrix}$. On a donc :
	\[-K_{2l-n-1}(X)=-K_{n}(X)K_{2l-1}(X)+K_{n-1}(X)K_{2l}(X).\] 
\noindent $-K_{2l-n-1}(X)$ est un polynôme de degré $2l-n-1$. Or, comme $l \leq n$, on a $2l-n-1 \leq n-1<n+1$ et donc $K_{2l-n-1}(X)$ n'a pas de terme de degré $n+1$. On peut donc reprendre ce qui a été fait en i), en modifiant les valeurs de $m$ et de $r$. Pour obtenir la formule souhaitée, il ne reste plus qu'à rajouter des termes nuls à chacune des sommes.

\end{proof}

\subsection{Preuve de la formule générale}

La formule du théorème \ref{11} est sympathique et sa preuve nécessite peu de calculs. Cependant, pour que notre égalité puisse prendre sa forme optimale, on doit lever la condition portant sur $l$. Malheureusement, on ne peut pas accomplir cette tâche en se reposant exclusivement sur le polynôme continuant. En effet, on pourrait définir une fraction rationnelle de la forme $\sum_{i=0}^{l} (-1)^{i}{n-i \choose i}X^{n-2i}$ mais celle-ci n'est a priori relié à aucune matrice. On est donc contraint d'adopter une approche plus calculatoire. Pour mener à bien celle-ci, on va prouver plusieurs petits résultats intermédiaires qui, une fois regroupés, nous donneront la formule souhaitée.

\begin{lemma}
\label{23}

Soient $l>n$ deux entiers naturels. On dispose de l'égalité :
\[\sum_{i=0}^{n} \left[-{n-i \choose i}{l+i \choose 2i+1}+{n-i \choose i-1}{l+i \choose 2i}\right]=(-1)^{n+1}{l \choose n+1}.\]

\end{lemma}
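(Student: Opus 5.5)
The plan is to reuse the continuant argument from the proof of Theorem~\ref{11}. For $l>n$ the polynomial $K_{2l-n-1}(X)$ now does have a term of degree $n+1$, and that term produces the right-hand side $(-1)^{n+1}{l \choose n+1}$. Since $l>n$, every $n-i$ with $0\le i\le n$ is nonnegative, so only classical binomial coefficients appear.

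\emph{Step 1.} Set $M:=M_{n}(X)[M_{2l+1}(X)]^{-1}$. By Proposition~\ref{21}, $M=M_{2l+1-n}(X)^{-1}$ whenever $2l+1>n$, which holds here. Comparing the $(1,1)$ entries exactly as in part ii) of the proof of Theorem~\ref{11} gives
\[-K_{n}(X)K_{2l-1}(X)+K_{n-1}(X)K_{2l}(X)=-K_{2l-n-1}(X).\]
Nothing in this step used $l\le n$.

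\emph{Step 2.} Extract the coefficient of $X^{n+1}$ on the left-hand side with Proposition~\ref{22}, as in part i). Write $u$ for this coefficient in $K_n K_{2l-1}$ and $v$ for it in $K_{n-1}K_{2l}$. This gives
\[u=(-1)^{l+1}\sum_{i} {n-i \choose i}{l+i \choose 2i+1}, \qquad v=(-1)^{l+1}\sum_{i} {n-i \choose i-1}{l+i \choose 2i}.\]
Only the summation ranges need checking. In $u$ the index runs over $0\le i\le \min(E[n/2],\,l-1)=E[n/2]$. The terms with $E[n/2]<i\le n$ vanish, since then $0\le n-i<i$ and ${n-i \choose i}=0$; so the sum can be extended to $0\le i\le n$. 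The sum for $v$ is handled the same way after the shift $j=i+1$. For $j\le n$ the terms beyond the natural range vanish because ${n-j \choose j-1}=0$ when $0\le n-j<j-1$, and the $j=0$ term is zero by convention. I expect this bookkeeping of summation ranges to be the only delicate point.

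\emph{Step 3.} Compute the coefficient of $X^{n+1}$ in $K_{2l-n-1}(X)$. The difference $(2l-n-1)-(n+1)=2(l-n-1)$ is even and nonnegative because $l\ge n+1$. Proposition~\ref{22} with $k=l-n-1$ therefore gives the coefficient
\[(-1)^{l-n-1}{2l-n-1-(l-n-1) \choose l-n-1}=(-1)^{l-n-1}{l \choose l-n-1}=(-1)^{l-n-1}{l \choose n+1}.\]

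\emph{Step 4.} Step 1 gives $-u+v=-(-1)^{l-n-1}{l \choose n+1}$. Multiplying by $(-1)^{l+1}$ yields
\[\sum_{i=0}^{n}\left[-{n-i \choose i}{l+i \choose 2i+1}+{n-i \choose i-1}{l+i \choose 2i}\right]=-(-1)^{2l-n}{l \choose n+1}=(-1)^{n+1}{l \choose n+1},\]
which is the statement of Lemma~\ref{23}.
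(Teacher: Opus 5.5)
Your proposal is correct and is essentially the paper's own proof. Both start from the identity $-K_{n}(X)K_{2l-1}(X)+K_{n-1}(X)K_{2l}(X)=-K_{2l-n-1}(X)$, obtained from $M_{n}(X)[M_{2l+1}(X)]^{-1}=M_{2l+1-n}(X)^{-1}$, and compare coefficients of $X^{n+1}$, where $K_{2l-n-1}$ now contributes $(-1)^{l-n-1}{l \choose n+1}$ since $l\geq n+1$; your separate range bookkeeping for $u$ and $v$ simply spells out the paper's single bound $\tilde{m}=E\left[\frac{n-1}{2}\right]+1$ followed by ``adding zero terms''.
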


\begin{proof}

Soient $l>n$ deux entiers naturels. On a $E\left[\frac{n}{2}\right] \leq E\left[\frac{n-1}{2}\right]+1$. On pose $\tilde{m}:=E\left[\frac{n-1}{2}\right]+1$. On utilise, comme précédemment, la matrice $M:=M_{n}(X)[M_{2l+1}(X)]^{-1}$. On dispose de l'égalité :
\[M=M_{2l+1-n}(X)^{-1}=\begin{pmatrix}
    -K_{2l-n-1}(X) & K_{2l-n}(X) \\
    -K_{2l-n}(X)  & K_{2l+1-n}(X)
   \end{pmatrix}.\]
\noindent On a donc $-K_{2l-n-1}(X)=-K_{n}(X)K_{2l-1}(X)+K_{n-1}(X)K_{2l}(X)$. Le coefficient de degré $n+1$ du polynôme de droite est $(-1)^{l+1}\sum_{i=0}^{\tilde{m}} \left[-{n-i \choose i}{l+i \choose 2i+1}+{n-i \choose i-1}{l+i \choose 2i}\right]$. En rajoutant des termes nuls, celui-ci devient :
\[(-1)^{l+1}\sum_{i=0}^{n} \left[-{n-i \choose i}{l+i \choose 2i+1}+{n-i \choose i-1}{l+i \choose 2i}\right].\]

\noindent Par ailleurs, le coefficient de degré $n+1$ de $-K_{2l-n-1}(X)$ est $(-1)^{l-n}{l \choose l-n-1}=(-1)^{l+1}\left[(-1)^{n+1}{l \choose n+1}\right]$. On a donc le résultat souhaité.

\end{proof}

On va utiliser dans tout ce qui va suivre les notations détaillées ci-dessous. Soient $n$ un entier relatif et $k$ un entier naturel non nul. On pose :
\begin{itemize}[label=$\circ$]
\item $u_{k,n}:=(-1)^{n}{n+k \choose n+1}$;
\item $v_{k,n}:=\sum_{i=n+1}^{n+k} \left[-{n-i \choose i}{n+k+i \choose 2i+1}+{n-i \choose i-1}{n+k+i \choose 2i}\right]$.
\end{itemize}
\noindent Pour démontrer l'égalité du théorème \ref{12}, on doit donc montrer que, pour tout $n \geq 0$ et $k \geq 1$, $u_{k,n}=v_{k,n}$. Cela ne semble pas évident de prime abord. Cependant, ici le membre de gauche a une forme particulièrement simple. On peut donc se concentrer d'abord sur celui-ci. Grâce au triangle de Pascal, on obtient immédiatement la formule de récurrence $u_{k+1,n+1}=u_{k,n+1}-u_{k+1,n}$, à laquelle on adjoint les conditions initiales $u_{k,0}=k$ pour tout $k \geq 1$ et $u_{1,n}=(-1)^{n}$ pour tout $n \geq 0$. Si on montre que la suite $v$ vérifie les mêmes conditions, on aura prouvé l'égalité recherchée. C'est donc à cette tâche que l'on va maintenant s'atteler.
\\
\\\indent L'une des propriétés les plus utiles lorsque l'on considère des coefficients binomiaux est le triangle de Pascal. Cependant, cette relation n'est valable que pour les coefficients binomiaux classiques. On va en donner ici un analogue pour leurs équivalents négatifs.

\begin{lemma}[\og Triangle de Pascal négatif \fg]
\label{24}

Soient $n<0$ et $i \geq 1$. L'égalité suivante est vérifiée :
\[{n \choose i}={n+1 \choose i}-{n \choose i-1}.\]

\end{lemma}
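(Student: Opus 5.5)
The identity is the ordinary Pascal relation ${n+1 \choose i}={n \choose i}+{n \choose i-1}$ rewritten with one term moved to the other side. The plan is to reduce it to the classical Pascal triangle through the definition ${n \choose k}=(-1)^{k}{k-n-1 \choose k}$ for $n<0$ and $k\geq 0$. Since $i\geq 1$, both $i$ and $i-1$ are non-negative, so that definition applies to ${n \choose i}$ and to ${n \choose i-1}$. The term ${n+1 \choose i}$ behaves differently depending on whether $n+1<0$ or $n+1=0$, so I split into these two cases.

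\emph{Cas $n=-1$.} Here ${n+1 \choose i}={0 \choose i}=0$ by the convention of the introduction, since $i\geq 1$. By definition, ${-1 \choose i}=(-1)^{i}{i \choose i}=(-1)^{i}$ and ${-1 \choose i-1}=(-1)^{i-1}{i-1 \choose i-1}=(-1)^{i-1}$. Both members then equal $(-1)^{i}$, which gives the equality.

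\emph{Cas $n\leq -2$.} Now $n$, $n+1$ are both negative, so all three coefficients are given by the definition:
\[{n \choose i}=(-1)^{i}{i-n-1 \choose i},\quad {n+1 \choose i}=(-1)^{i}{i-n-2 \choose i},\quad {n \choose i-1}=(-1)^{i-1}{i-n-2 \choose i-1}.\]
The right-hand member is therefore
\[(-1)^{i}\left[{i-n-2 \choose i}+{i-n-2 \choose i-1}\right].\]
Because $n\leq -2$, we have $i-n-2\geq i\geq 1$, so the classical Pascal triangle applies to ${i-n-1 \choose i}$. The bracket thus equals ${i-n-1 \choose i}$, and the right-hand member equals $(-1)^{i}{i-n-1 \choose i}={n \choose i}$.

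There is no real obstacle. The only point needing care is checking that every binomial coefficient falls under the correct convention (classical, zero, or the negative definition). In particular one must confirm that Pascal is used only with top index at least the bottom index, which is exactly why the case $n=-1$ is isolated.
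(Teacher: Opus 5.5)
Your proof is correct and is essentially the paper's: the paper writes ${n \choose i}=(-1)^{i}{i-n-1 \choose i}$, splits it with the classical Pascal triangle into $(-1)^{i}{i-n-2 \choose i}+(-1)^{i}{i-n-2 \choose i-1}$, and recognizes these two terms as ${n+1 \choose i}$ and $-{n \choose i-1}$. The only difference is that the paper does not treat $n=-1$ separately: there $(-1)^{i}{i-1 \choose i}=0={0 \choose i}$, and Pascal at ${i \choose i}$ holds, both by the convention of the introduction, so your separate check is redundant but harmless and slightly more careful.
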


\begin{proof}

Par le triangle de Pascal, on a :
\begin{eqnarray*} 
{n \choose i} &=& (-1)^{i}{i-n-1 \choose i} \\
              &=& (-1)^{i}{i-(n+1)-1 \choose i}+(-1)^{i}{i-n-2 \choose i-1} \\
							&=& {n+1 \choose i}-(-1)^{i-1}{(i-1)-n-1 \choose i-1} \\
							&=& {n+1 \choose i}-{n \choose i-1}.
\end{eqnarray*}

\end{proof}

\begin{lemma}
\label{25}

Soient $n \geq 0$ et $k \geq 1$. On a $v_{k+1,n+1}=v_{k,n+1}-v_{k+1,n}$.

\end{lemma}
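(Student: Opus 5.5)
The plan is to prove the identity by a direct term-by-term computation. I will use the classical Pascal triangle on the factors ${l+i \choose 2i+1}$ and ${l+i \choose 2i}$, which are always classical binomial coefficients. I will use the \og triangle de Pascal négatif \fg{} (lemme \ref{24}) on the factors ${n-i \choose i}$ and ${n-i \choose i-1}$, whose upper index is $\leq 0$ on the whole summation range.

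To fix notation, for an integer $p$ (the upper parameter) and $l \geq 0$ set
\[T_{p,l}(i):=-{p-i \choose i}{l+i \choose 2i+1}+{p-i \choose i-1}{l+i \choose 2i},\]
so that $v_{k,n}=\sum_{i=n+1}^{n+k}T_{n,n+k}(i)$. The three quantities in the statement are then:
\[v_{k+1,n+1}=\sum_{i=n+2}^{n+k+2}T_{n+1,n+k+2}(i),\quad v_{k,n+1}=\sum_{i=n+2}^{n+k+1}T_{n+1,n+k+1}(i),\quad v_{k+1,n}=\sum_{i=n+1}^{n+k+1}T_{n,n+k+1}(i).\]

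\textbf{Step 1.} In $v_{k+1,n+1}$, I apply Pascal to the lower factors:
\[{n+k+2+i \choose 2i+1}={n+k+1+i \choose 2i+1}+{n+k+1+i \choose 2i},\qquad {n+k+2+i \choose 2i}={n+k+1+i \choose 2i}+{n+k+1+i \choose 2i-1}.\]
This writes $v_{k+1,n+1}$ as $\sum_i T_{n+1,n+k+1}(i)$ plus a remainder
\[R=\sum_i\left[-{n+1-i \choose i}{n+k+1+i \choose 2i}+{n+1-i \choose i-1}{n+k+1+i \choose 2i-1}\right].\]
The first part equals $v_{k,n+1}$ up to the extra index $i=n+k+2$. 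At that index one has $l+i=2n+k+3+\ldots$ compared with $2i+1=2n+2k+5$, so the relevant factors vanish whenever $l+i<2i$; I will check this case by case.

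\textbf{Step 2.} It then remains to show $R=-v_{k+1,n}$, up to boundary terms. I apply lemme \ref{24} in the form
\[{n+1-i \choose i}={n-i \choose i}+{n-i \choose i-1},\qquad {n+1-i \choose i-1}={n-i \choose i-1}+{n-i \choose i-2},\]
which is valid because $n-i<0$ on the range. This is also valid at $i=n+1$, where ${0 \choose i}=0$ agrees with $(-1)^i+(-1)^{i-1}$. After this expansion, $R$ becomes a combination of the four products ${n-i \choose i}{\cdot \choose 2i}$, ${n-i \choose i-1}{\cdot \choose 2i}$, ${n-i \choose i-1}{\cdot \choose 2i-1}$ and ${n-i \choose i-2}{\cdot \choose 2i-1}$, all with lower-row top $n+k+1+i$.

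Two of these products should cancel directly. For the other two, I will reindex by $i \mapsto i+1$: the pair ${n-i \choose i-2}{\cdot \choose 2i-1}$ becomes ${n-1-i \choose i-1}{\cdot \choose 2i+1}$. I will then recombine them with lemme \ref{24} once more (in the opposite direction) to recover exactly the two terms of $-T_{n,n+k+1}(i)$. If this direct pairing does not close, the fallback is to start instead from $v_{k,n+1}+v_{k+1,n}$ and apply the same two Pascal rules in the other order.

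\textbf{Main obstacle.} The hard part is the bookkeeping of the boundary terms. The three sums run over shifted ranges: $v_{k+1,n}$ starts at $i=n+1$ while the others start at $n+2$, and $v_{k+1,n+1}$ ends at $n+k+2$. The reindexing in Step 2 shifts these ranges once more. I will list every leftover term at $i=n+1$, $i=n+2$, $i=n+k+1$ and $i=n+k+2$ and show each vanishes or that they cancel in pairs. Two facts should do this:
\begin{itemize}[label=$\circ$]
\item ${l+i \choose j}=0$ as soon as $j>l+i$, which handles the top end;
\item at the bottom end, ${0 \choose i}=0$ for $i \geq 1$ together with the explicit values ${-1 \choose j}=(-1)^j$.
\end{itemize}
Since $n \geq 0$ and $k \geq 1$, the cases $k=1$ and $n=0$ may produce short ranges, and I will check these directly.
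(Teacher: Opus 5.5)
Your Step 1 is the paper's opening move: Pascal on the lower factors turns $v_{k+1,n+1}$ into $v_{k,n+1}$ plus
\[R=\sum_{i=n+2}^{n+k+2}\Bigl[-\binom{n+1-i}{i}\binom{N_i}{2i}+\binom{n+1-i}{i-1}\binom{N_i}{2i-1}\Bigr],\qquad N_i:=n+k+1+i,\]
which is the paper's $\binom{-k-1}{n+k+1}+\alpha+\beta$. The gap is in Step 2, whose announced mechanism does not work.

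After your two applications of lemme~\ref{24}, the four products are $-\binom{n-i}{i}\binom{N_i}{2i}$, $-\binom{n-i}{i-1}\binom{N_i}{2i}$, $\binom{n-i}{i-1}\binom{N_i}{2i-1}$ and $\binom{n-i}{i-2}\binom{N_i}{2i-1}$. They are pairwise distinct, so no two of them cancel. Now shift the last two ($i=j+1$) and recombine them with lemme~\ref{24}. This gives the single product $\binom{n-j}{j}\binom{n+k+2+j}{2j+1}$, not the two terms of $-T_{n,n+k+1}(j)$, because its lower factor has top $n+k+2+j$ instead of $n+k+1+j$. In fact this only undoes your second expansion and returns the paper's reindexed $\beta$. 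So expanding $\binom{n+1-i}{i-1}$ was unnecessary; only the $\alpha$-part needs lemme~\ref{24}.

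The missing step is a further classical Pascal split, exactly as the paper does for $\beta$:
\[\binom{n+k+2+j}{2j+1}=\binom{n+k+1+j}{2j+1}+\binom{n+k+1+j}{2j}.\]
The pieces then match up as follows:
\begin{itemize}
\item The first piece, summed over $n+1\le j\le n+k+1$, is exactly the $\binom{n-j}{j}\binom{N_j}{2j+1}$ part of $-v_{k+1,n}$.
\item The second piece cancels the product $-\binom{n-i}{i}\binom{N_i}{2i}$ coming from $\alpha$, for $n+2\le i\le n+k+1$.
\item The remaining part of $-v_{k+1,n}$ is the product $-\binom{n-i}{i-1}\binom{N_i}{2i}$, which is already present and needs no work.
\end{itemize}
The only leftover is $\binom{-1}{n+1}\binom{2n+k+2}{2n+2}$ at $j=n+1$. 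Via $\binom{-1}{n+1}=-\binom{-1}{n}$, it is the missing $i=n+1$ term of $-v_{k+1,n}$.

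One caution about your first bullet. The $\beta$-terms at $i=n+k+2$ do \emph{not} vanish: since $\binom{2n+2k+3}{2n+2k+3}=1$, they contribute $\binom{-k-1}{n+k+1}$. You must keep them inside the sum and carry them through the shift rather than discard them. Done this way, they are absorbed automatically. This also spares you the paper's separate reconciliation of $\binom{-k-1}{n+k+1}$ with $\binom{-k}{n+k+1}$.
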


\begin{proof}

Avec le triangle de Pascal, on a les égalités ci-dessous :\begin{eqnarray*}
v_{k+1,n+1} &=& \sum_{i=n+2}^{n+k+2} -{n+1-i \choose i}\left({n+k+1+i \choose 2i+1}+{n+k+1+i \choose 2i}\right) \\
            & & +{n+1-i \choose i-1}\left({n+k+1+i \choose 2i}+{n+k+1+i \choose 2i-1}\right) \\
            &=& {-k-1 \choose n+k+1}+\sum_{i=n+2}^{n+k+1} -{n+1-i \choose i}\left({n+k+1+i \choose 2i+1}+{n+k+1+i \choose 2i}\right) \\
						& & +{n+1-i \choose i-1}\left({n+k+1+i \choose 2i}+{n+k+1+i \choose 2i-1}\right) \\
						&=& {-k-1 \choose n+k+1}+\sum_{i=n+2}^{n+k+1} \left[-{n+1-i \choose i}{n+k+1+i \choose 2i+1}+{n+1-i \choose i-1}{n+k+1+i \choose 2i}\right] \\
						& & +\sum_{i=n+2}^{n+k+1} \left[-{n+1-i \choose i}{n+k+1+i \choose 2i}+{n+1-i \choose i-1}{n+k+1+i \choose 2i-1}\right]  \\
						&=& {-k-1 \choose n+k+1}+v_{k,n+1}+\underbrace{\sum_{i=n+2}^{n+k+1} -{n+1-i \choose i}{n+k+1+i \choose 2i}}_{\alpha}+\underbrace{\sum_{i=n+2}^{n+k+1} {n+1-i \choose i-1}{n+k+1+i \choose 2i-1}}_{\beta}.
\end{eqnarray*}

\noindent En posant $j:=i-1$ et en utilisant le triangle de Pascal, on obtient :
\begin{eqnarray*}
\beta &=& \sum_{j=n+1}^{n+k} {n-j \choose j}{n+k+j+2 \choose 2j+1} \\
      &=& {-1 \choose n+1}{2n+k+3 \choose 2n+3}+\sum_{j=n+2}^{n+k} {n-j \choose j}{n+k+j+1 \choose 2j+1}+\sum_{j=n+2}^{n+k} {n-j \choose j}{n+k+j+1 \choose 2j}.
\end{eqnarray*}

\noindent En utilisant le triangle de Pascal négatif, on a :
\[\alpha=-{-k \choose n+k+1}-\sum_{i=n+2}^{n+k} {n-i \choose i}{n+k+1+i \choose 2i}-\sum_{i=n+2}^{n+k} {n-i \choose i-1}{n+k+1+i \choose 2i}.\]

\noindent En utilisant une nouvelle fois le triangle de Pascal négatif, on obtient :
\[{-k-1 \choose n+k+1}={-k \choose n+k+1}-{-k-1 \choose n+k}.\]

\noindent Par conséquent, on a, en posant $\gamma:=v_{k+1,n+1}-v_{k,n+1}$ et en utilisant que ${-1 \choose n+1}=-{-1 \choose n}$ :
\begin{eqnarray*}
\gamma &=& {-1 \choose n+1}{2n+k+3 \choose 2n+3}-{-k-1 \choose n+k}+\sum_{i=n+2}^{n+k} {n-i \choose i}{n+k+i+1 \choose 2i+1}-{n-i \choose i-1}{n+k+1+i \choose 2i} \\
       &=& {-1 \choose n+1}\left({2n+k+2 \choose 2n+3}+{2n+k+2 \choose 2n+2}\right)+\sum_{i=n+2}^{n+k+1} {n-i \choose i}{n+k+i+1 \choose 2i+1}-{n-i \choose i-1}{n+k+1+i \choose 2i} \\
			 &=& \sum_{i=n+1}^{n+k+1} {n-i \choose i}{n+k+i+1 \choose 2i+1}-{n-i \choose i-1}{n+k+1+i \choose 2i} \\
			 &=& -v_{k+1,n}.
\end{eqnarray*}
\end{proof}

\begin{lemma}
\label{26}

Soit $k \geq 1$. On a $v_{k+1,0}=1+v_{k,0}-v_{k+1,-1}$.

\end{lemma}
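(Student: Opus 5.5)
Je reprendrai mot pour mot le calcul de la preuve du lemme \ref{25}, en l'appliquant au cas $n=-1$. Le lemme \ref{25} exige $n \geq 0$ parce qu'un terme de bord y devient « classique » ou « négatif » selon le signe de $n$ ; le terme $1$ du membre de droite devrait précisément venir de ce terme de bord. Par définition,
\[v_{k+1,0}=\sum_{i=1}^{k+1}\left[-{-i \choose i}{k+1+i \choose 2i+1}+{-i \choose i-1}{k+1+i \choose 2i}\right],\]
et $v_{k,0}$, $v_{k+1,-1}$ s'écrivent de même, avec des bornes $i\in[\![1;k]\!]$ et $i\in[\![0;k]\!]$ respectivement. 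Pour $v_{k+1,-1}$, le terme $i=0$ vaut $-{-1 \choose 0}{k \choose 1}+{-1\choose -1}{k \choose 0}=-k+0$, puisque ${-1 \choose -1}=0$ par la convention (indice du bas négatif avec $n<0$).

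Dans $v_{k+1,0}$, j'applique le triangle de Pascal classique aux coefficients ${k+1+i \choose 2i+1}$ et ${k+1+i \choose 2i}$. J'isole le terme $i=k+1$, qui se réduit à ${-k-1 \choose k}$ ou à un terme analogue, et je sépare la somme en trois morceaux : $v_{k,0}$, puis $\alpha=-\sum_{i=1}^{k}{-i \choose i}{k+i \choose 2i}$ et $\beta=\sum_{i=1}^{k}{-i \choose i-1}{k+i \choose 2i-1}$. Dans $\beta$, je pose $j=i-1$, ce qui donne $\sum_{j=0}^{k-1}{-j-1 \choose j}{k+j+1 \choose 2j+1}$. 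Je traite ensuite le terme $j=0$ à part : il vaut ${-1 \choose 0}{k+1 \choose 1}=k+1$, ici un coefficient de type « positif » à cause de $n=0$. Sur les autres termes, j'applique Pascal à ${k+j+1 \choose 2j+1}$. Dans $\alpha$, j'utilise le triangle de Pascal négatif (lemme \ref{24}) sous la forme ${-i \choose i}={-i+1 \choose i}-{-i \choose i-1}$. C'est la seule différence avec le lemme \ref{25} : pour $i=1$, on obtient ${0 \choose 1}=0$, un coefficient classique. Le lemme \ref{24} reste valable puisque $-i<0$.

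En regroupant comme dans la preuve du lemme \ref{25}, les sommes restantes doivent se recombiner en $-v_{k+1,-1}$, à une différence près : le terme de bord $(k+1)$ issu de $\beta$ et le terme $-k$ de $v_{k+1,-1}$ (indice $i=0$). Le calcul attendu est que $v_{k+1,0}-v_{k,0}+v_{k+1,-1}$ se réduise à $(k+1)-k=1$, ou à une combinaison analogue des termes de bord, les termes ${-k-1 \choose \cdot}$ s'éliminant grâce au lemme \ref{24} comme dans la preuve précédente.

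La principale difficulté sera la comptabilité exacte des termes de bord, en $i=1$, $i=k+1$ et $j=0$. C'est là que l'écart entre coefficients négatifs et classiques produit le « $1$ » supplémentaire. Je vérifierai d'abord l'égalité sur $k=1$ : $v_{2,0}=-{-1\choose1}{3 \choose 3}+\ldots$, pour fixer les signes. Ensuite, je suivrai ligne à ligne la preuve du lemme \ref{25} en remplaçant $n$ par $-1$, et je noterai chaque endroit où une identité utilisée n'est plus valable.
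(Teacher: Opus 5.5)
Your route is the paper's: you split $v_{k+1,0}$ by Pascal into ${-k-1 \choose k}+v_{k,0}+\alpha+\beta$, shift $\beta$, and isolate its $j=0$ term $k+1$. Your final bookkeeping is also right: the $1$ is $(k+1)-k$, where $-k$ is the $i=0$ term of $v_{k+1,-1}$.

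The one step you spell out differently from Lemma~\ref{25} is wrong, though. In $\alpha$ you apply Lemma~\ref{24} at $n=-i$, that is ${-i \choose i}={-i+1 \choose i}-{-i \choose i-1}$. This produces coefficients with upper index $1-i$ and $-i$. They do not cancel against the piece $\sum_{j=1}^{k-1}{-1-j \choose j}{k+j \choose 2j}$ of $\beta$. They also do not occur in $v_{k+1,-1}$, whose coefficients all have upper index $-1-i$. So the recombination into $-v_{k+1,-1}$ does not happen with this choice. What you need, and what Lemma~\ref{25} does at $n=-1$, is Lemma~\ref{24} at $n=-i-1$: ${-i \choose i}={-1-i \choose i}+{-1-i \choose i-1}$ for $1\leq i\leq k-1$, keeping the term $i=k$ as $-{-k \choose k}$. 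Then everything falls into place:
the part ${-1-i \choose i}{k+i \choose 2i}$ cancels with $\beta$;
the part ${-1-i \choose i-1}{k+i \choose 2i}$, together with $\sum_{j=1}^{k-1}{-1-j \choose j}{k+j \choose 2j+1}$ from $\beta$, gives the terms $1\leq i\leq k-1$ of $-v_{k+1,-1}$;
and ${-k-1 \choose k}-{-k \choose k}=-{-k-1 \choose k-1}$ (Lemma~\ref{24} at $n=-k-1$) is its term $i=k$.

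Your account of why Lemma~\ref{25} needs $n\geq 0$ is also misplaced. At $n=-1$ the $\alpha$ step of Lemma~\ref{25} goes through unchanged, and no classical coefficient such as ${0 \choose 1}$ appears. What fails is the last regrouping, which uses ${-1 \choose n+1}=-{-1 \choose n}$; at $n=-1$ this is false, since ${-1 \choose 0}=1$ while ${-1 \choose -1}=0$. Concretely, write ${-1 \choose 0}{k+1 \choose 1}={-1 \choose 0}{k \choose 1}+{-1 \choose 0}{k \choose 0}$. The first piece is the $i=0$ term $k$ of $-v_{k+1,-1}$. The second piece equals $1$ and would have to match $-{-1 \choose -1}{k \choose 0}=0$, which it does not; that is the extra $1$. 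Once the $\alpha$ step is corrected, your plan is exactly the paper's proof. As written, the regrouping is only asserted, and the one explicit choice you make would not produce it.
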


\begin{proof}

La preuve utilise globalement les mêmes éléments que la précédente. Aussi, on va passer certains détails. Avec le triangle de Pascal, on a :
\begin{eqnarray*}
v_{k+1,0} &=& \sum_{i=1}^{k+1} \left[-{-i \choose i}{k+1+i \choose 2i+1}+{-i \choose i-1}{k+1+i \choose 2i}\right] \\
          &=& {-k-1 \choose k}+v_{k,0}+\underbrace{\sum_{i=1}^{k} -{-i \choose i}{k+i \choose 2i}}_{\alpha'}+\underbrace{\sum_{i=1}^{k} {-i \choose i-1}{k+i \choose 2i-1}}_{\beta'}.
\end{eqnarray*}
\[\beta'=\sum_{j=0}^{k-1} {-1-j \choose j}{k+j+1 \choose 2j+1}=(k+1)+\sum_{j=1}^{k-1} {-1-j \choose j}{k+j \choose 2j+1}+\sum_{j=1}^{k-1} {-1-j \choose j}{k+j \choose 2j}.\]
\[\alpha'=-{-k \choose k}-\sum_{i=1}^{k-1} {-1-i \choose i}{k+i \choose 2i}-\sum_{i=1}^{k-1} {-1-i \choose i-1}{k+i \choose 2i}.\]

\noindent Par conséquent, on obtient :
\begin{eqnarray*}
v_{k+1,0} &=& {-k-1 \choose k}+v_{k,0}+(k+1)-{-k \choose k}+\sum_{i=1}^{k-1} -{-1-i \choose i-1}{k+i \choose 2i}+{-1-i \choose i}{k+i \choose 2i+1} \\
          &=& 1+v_{k,0}-{-k-1 \choose k-1}+\sum_{i=0}^{k-1} -{-1-i \choose i-1}{k+i \choose 2i}+{-1-i \choose i}{k+i \choose 2i+1} \\
					&=& 1+v_{k,0}-v_{k+1,-1}.
\end{eqnarray*}
\end{proof}

\begin{lemma}
\label{27}

Soient $k \geq 1$ et $n \leq -1$. On a $v_{k+1,n}=v_{k,n}-v_{k+1,n-1}$.

\end{lemma}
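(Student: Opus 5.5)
Je calquerai la preuve sur celle du lemme \ref{25}. Pour $n \leq -1$ et $k \geq 1$, on a par définition
\[v_{k+1,n}=\sum_{i=n+1}^{n+k+1} \left[-{n-i \choose i}{n+k+1+i \choose 2i+1}+{n-i \choose i-1}{n+k+1+i \choose 2i}\right].\]
Comme $n-i<0$ pour tout $i \geq n+1 \geq 0$ (et même pour $i$ négatif tant que $i>n$), tous les coefficients en haut à gauche sont « négatifs », et le lemme \ref{24} s'appliquera sans restriction dès que l'indice du bas est $\geq 1$. Les termes d'indice $i<0$ ou tels que $2i+1<0$ s'annulent par convention. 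La première étape consistera à appliquer le triangle de Pascal classique aux coefficients ${n+k+1+i \choose 2i+1}$ et ${n+k+1+i \choose 2i}$, qui sont des coefficients usuels puisque $n+k+1+i \geq 2i+1$ pour $i \leq n+k$. On isolera le terme extrême $i=n+k+1$, qui donne un terme de bord du type ${-k-1 \choose n+k+1}$. On obtiendra ainsi $v_{k+1,n}=\text{(bord)}+v_{k,n}+\alpha+\beta$, avec
\[\alpha=\sum_{i=n+1}^{n+k} -{n-i \choose i}{n+k+i \choose 2i}, \qquad \beta=\sum_{i=n+1}^{n+k} {n-i \choose i-1}{n+k+i \choose 2i-1}.\]

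La deuxième étape traitera $\beta$ comme dans le lemme \ref{25} : on posera $j=i-1$, ce qui donne ${n-1-j \choose j}{n+k+j+1 \choose 2j+1}$, puis on développera par Pascal en ${n+k+j \choose 2j+1}+{n+k+j \choose 2j}$. Le terme $j=n$ sera séparé : il vaut ${-1 \choose n}$ et, pour $n \leq -1$, il est nul si $n<0$. Ce point est plus simple que dans le lemme \ref{25}, mais il faudra vérifier attentivement les termes avec $j<0$ lorsque $n+1 \leq 0$, qui sont nuls par la convention ${m \choose -k}=0$. Ensuite, on appliquera le triangle de Pascal négatif (lemme \ref{24}) à ${n-i \choose i}$ dans $\alpha$ en l'écrivant ${n-i+1 \choose i}-{n-i \choose i-1}$, avec $n-i<0$. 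Il faudra là aussi gérer le cas $n-i+1=0$, couvert par la remarque de l'introduction selon laquelle Pascal reste valide pour ${0 \choose 0}$. On appliquera enfin le lemme \ref{24} au terme de bord ${-k-1 \choose n+k+1}$.

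La dernière étape consistera à regrouper $\gamma:=v_{k+1,n}-v_{k,n}$. Les sommes issues de $\alpha$ et de $\beta$ qui portent sur ${n-1-i \choose i}$ et ${n-1-i \choose i-1}$ doivent se recombiner, après réabsorption des termes de bord, en $-\sum_{i=n}^{n+k} \left[-{n-1-i \choose i}{n+k+i \choose 2i+1}+{n-1-i \choose i-1}{n+k+i \choose 2i}\right]=-v_{k+1,n-1}$. L'obstacle principal sera la comptabilité des termes aux bornes $i=n$, $i=n+1$ et $i=n+k+1$, ainsi que la vérification que les cas limites (indices négatifs, ou coefficient en haut égal à $0$ ou $-1$) sont bien compatibles avec les conventions. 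C'est justement ce qui distingue ce lemme des lemmes \ref{25} et \ref{26} : le terme ${-1 \choose n+1}$ y était non nul, tandis qu'ici la borne inférieure $i=n+1\leq 0$ fait intervenir des termes nuls par convention. Je traiterai séparément le cas $n=-1$ si la recombinaison générale ne le couvre pas.
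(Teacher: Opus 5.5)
Your overall plan is the paper's (its third case, $-k+1\le n\le -1$): Pascal on the second factors, isolate the extreme term, shift $j=i-1$ in $\beta$, negative Pascal in $\alpha$, recombine. However, the pivotal step is stated the wrong way round.

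You rewrite $\binom{n-i}{i}$ in $\alpha$ as $\binom{n-i+1}{i}-\binom{n-i}{i-1}$, which is Lemma \ref{24} with $N=n-i$. This raises the upper index. It produces coefficients $\binom{n+1-i}{i}$, which belong to $v_{\cdot,n+1}$, and $\binom{n-i}{i-1}$. None of these cancels the piece $\sum_j\binom{n-1-j}{j}\binom{n+k+j}{2j}$ coming from $\beta$, and none of them occurs in $v_{k+1,n-1}$. Your last paragraph assumes that $\alpha$ yields sums in $\binom{n-1-i}{i}$ and $\binom{n-1-i}{i-1}$. That contradicts the expansion you wrote, so the announced recombination does not follow from your steps.

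What is needed is Lemma \ref{24} applied with $N=n-1-i$, i.e. $\binom{n-i}{i}=\binom{n-1-i}{i}+\binom{n-1-i}{i-1}$, which also holds for $i=0$. With this, the products $\binom{n-1-i}{i}\binom{n+k+i}{2i}$ coming from $\alpha$ and from $\beta$ cancel exactly. What remains is $\sum_i\bigl[\binom{n-1-i}{i}\binom{n+k+i}{2i+1}-\binom{n-1-i}{i-1}\binom{n+k+i}{2i}\bigr]$, which is $-v_{k+1,n-1}$ up to its extreme terms.

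The boundary bookkeeping also needs correcting, because you transplanted Lemma \ref{25} without shifting $n+1\to n$:

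The $i=n+k+1$ term of $v_{k+1,n}$ is $\binom{-k-1}{n+k}$, not $\binom{-k-1}{n+k+1}$.

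Three boundary contributions must cancel: this term, the $i=n+k$ term $-\binom{-k}{n+k}$ of $\alpha$, and the $i=n+k$ term $\binom{-k-1}{n+k-1}$ of $v_{k+1,n-1}$. They cancel by Lemma \ref{24} with $N=-k-1$ and lower index $n+k$.

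At the lower end everything vanishes because $n<0$: both the $j=n$ term $\binom{-1}{n}\binom{2n+k+1}{2n+1}$ of $\beta$ and the $i=n$ term of $v_{k+1,n-1}$ are zero. This is exactly why no extra constant appears here, unlike in Lemma \ref{26}.

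Finally, that last use of Lemma \ref{24} requires $n+k\ge 1$. So the range needing separate care is $n\le -k$, not $n=-1$, which is covered by the general computation. You can either check that $\binom{-k-1}{m}=\binom{-k}{m}-\binom{-k-1}{m-1}$ still holds for $m\le 0$ (it does, trivially), or do as the paper does. There, all terms vanish for $n\le -k-2$, and the cases $n\in\{-k,-k-1\}$ are settled by direct computation.
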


\begin{proof}

On distingue trois cas. Si $n \leq -k-2$ alors $v_{k+1,n}=v_{k,n}=v_{k+1,n-1}=0$ car tous les termes de ces sommes sont tous multiples d'un coefficient binomial dont l'entier inférieur est négatif. Si $n=-k$ ou $n=-k-1$ alors on a, par un calcul direct, $v_{k+1,n}=v_{k,n}=v_{k+1,n-1}=0$. On suppose maintenant que $-k+1 \leq n \leq -1$. En particulier, on a $n+k \geq 1$. La preuve utilise, là encore, des éléments analogues à ce qui a déjà été fait. 
\begin{eqnarray*}
v_{k+1,n} &=& \sum_{i=0}^{n+k+1} \left[-{n-i \choose i}{n+k+1+i \choose 2i+1}+{n-i \choose i-1}{n+k+1+i \choose 2i}\right] \\
          &=& {-k-1 \choose n+k}+v_{k,n}+\underbrace{\sum_{i=0}^{n+k} -{n-i \choose i}{n+k+i \choose 2i}}_{\alpha''}+\underbrace{\sum_{i=1}^{n+k} {n-i \choose i-1}{n+k+i \choose 2i-1}}_{\beta''}.
\end{eqnarray*}

\[\beta''=\sum_{j=0}^{n+k-1} {n-j-1 \choose j}{n+k+j+1 \choose 2j+1}=\sum_{j=0}^{n+k-1} {n-1-j \choose j}{n+k+j \choose 2j+1}+\sum_{j=0}^{n+k-1} {n-1-j \choose j}{n+k+j \choose 2j}.\]
\[\alpha''=-{-k \choose n+k}-\sum_{i=0}^{n+k-1} {n-i-1 \choose i}{n+k+i \choose 2i}-\sum_{i=0}^{n+k-1} {n-i-1 \choose i-1}{n+k+i \choose 2i}.\]

\noindent Par conséquent, on a :
\begin{eqnarray*}
v_{k+1,n} &=& v_{k,n}+{-k-1 \choose n+k}-{-k \choose n+k}+\sum_{i=0}^{n+k-1} {n-i-1 \choose i}{n+k+i \choose 2i+1}-{n-i-1 \choose i-1}{n+k+i \choose 2i} \\
          &=& -{-k-1 \choose n+k-1}+v_{k,n}+\sum_{i=n}^{n+k-1} {n-i-1 \choose i}{n+k+i \choose 2i+1}-{n-i-1 \choose i-1}{n+k+i \choose 2i} \\
          &=& v_{k,n}-v_{k+1,n-1}.
\end{eqnarray*}
\end{proof}

\begin{lemma}
\label{28}

Soient $n \geq 0$ et $k \geq 1$. On a $v_{1,n}=(-1)^{n}$ et $v_{k,0}=k$.

\end{lemma}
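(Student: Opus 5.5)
The plan is to compute $v_{1,n}$ directly, and then to get $v_{k,0}$ by induction on $k$ from the lemme \ref{26}. That induction requires the vanishing of $v_{k+1,-1}$, which I will deduce from the lemme \ref{27}. Throughout I use that, for $m\geq 0$, the definition of negative binomial coefficients gives ${-1 \choose m}=(-1)^{m}{m \choose m}=(-1)^{m}$. I also use that any coefficient with a negative lower index is $0$.

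\emph{Computation of $v_{1,n}$.} For $k=1$ the sum defining $v_{1,n}$ reduces to the single index $i=n+1$:
\[v_{1,n}=-{-1 \choose n+1}{2n+2 \choose 2n+3}+{-1 \choose n}{2n+2 \choose 2n+2}.\]
The first product vanishes because ${2n+2 \choose 2n+3}=0$. The second equals $(-1)^{n}\cdot 1$, so $v_{1,n}=(-1)^{n}$ for $n\geq 0$. The same computation handles $n\leq -1$. For $n=-1$ the only term is $i=0$, giving $-{-1 \choose 0}{0 \choose 1}+{-1 \choose -1}{0 \choose 0}=0$. For $n\leq -2$ the index $i=n+1$ is negative, so both lower indices are negative and $v_{1,n}=0$.

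\emph{Vanishing of $v_{k,n}$ for $n\leq -1$.} I will show $v_{k,n}=0$ for all $k\geq 1$ and $n\leq -1$, by induction on $k$; the case $k=1$ was just done. Assume the claim holds for $k$. The lemme \ref{27} gives, for all $n\leq -1$,
\[v_{k+1,n}=v_{k,n}-v_{k+1,n-1}=-v_{k+1,n-1}.\]
As observed at the start of the proof of the lemme \ref{27}, $v_{k+1,n}=0$ for $n\leq -k-2$, since every term contains a binomial coefficient with negative lower index. Propagating the relation upward from $n=-k-2$ gives $v_{k+1,n}=0$ for every $n\leq -1$. In particular $v_{k+1,-1}=0$ for all $k\geq 1$.

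\emph{Computation of $v_{k,0}$.} The first step gives $v_{1,0}=(-1)^{0}=1$. For $k\geq 1$, the lemme \ref{26} together with $v_{k+1,-1}=0$ yields $v_{k+1,0}=1+v_{k,0}$. Hence $v_{k,0}=k$ by induction on $k$.

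The only delicate point is bookkeeping, namely handling the negative binomial coefficients correctly at the boundary cases. The nontrivial work is already contained in the lemmes \ref{26} and \ref{27}.
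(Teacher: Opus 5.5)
Your proposal is correct and follows the paper's proof essentially step for step. Like the paper, you compute $v_{1,n}$ directly from the single term $i=n+1$. You then prove $v_{k,h}=0$ for all $h\leq -1$ by induction on $k$, propagating Lemme~\ref{27} upward from the range $h\leq -k-2$ where every term vanishes. Finally, Lemme~\ref{26} gives $v_{k+1,0}=1+v_{k,0}$, hence $v_{k,0}=k$.
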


\begin{proof}

On considère séparément chacune des deux valeurs.
\begin{itemize}[label=$\circ$]
\item On a $v_{1,n}=-{-1 \choose n+1}{2n+2 \choose 2n+3}+{-1 \choose n}{2n+2 \choose 2n+2}={-1 \choose n}=(-1)^{n}$.
\\
\item Montrons par récurrence sur $k$ que, pour tout $h \leq -1$, $v_{k,h}=0$. Pour tout $h \leq -2$, $v_{1,h}=0$ car les termes de la somme sont tous multiples d'un coefficient binomial dont l'entier inférieur est négatif. Par ailleurs, $v_{1,-1}=0$. On suppose qu'il existe un $k \geq 1$ tel que, pour tout $h \leq -1$, $v_{k,h}=0$. Soit $h \leq -1$. Si $h \leq -k-2$ alors $v_{k+1,h}=0$, car les termes de la somme sont tous multiples d'un coefficient binomial dont l'entier inférieur est négatif. Supposons que $h \geq -k-1$. Par le lemme \ref{27}, on dispose de l'égalité $v_{k+1,-k-1}=v_{k,-k-1}-v_{k+1,-k-2}$. Or, $v_{k,-k-1}=0$, par hypothèse de récurrence, et $v_{k+1,-k-2}=0$. Donc, $v_{k+1,-k-1}=0$. Par le lemme \ref{27}, $v_{k+1,-k}=v_{k,-k}-v_{k+1,-k-1}=0$. De proche en proche, on obtient $v_{k,h}=0$.
\\
\\Ainsi, par le lemme \ref{26}, $v_{k+1,0}=1+v_{k,0}$. Montrons par récurrence que, pour tout $m \geq 1$, $v_{m,0}=m$. Si $m=1$, $v_{1,0}=(-1)^{0}=1$. Supposons qu'il existe un entier $m \geq 1$ tel que $v_{m,0}=m$. On a $v_{m+1,0}=1+v_{m,0}=m+1$. Par récurrence, le résultat est démontré.

\end{itemize}

\end{proof}

\noindent On peut maintenant effectuer la preuve de notre formule.

\begin{proof}[Démonstration du théorème \ref{12}]

Soient $n$ et $l$ deux entiers naturels. Si $0 \leq l \leq n$ alors, par le théorème \ref{11}, la formule est vraie. On suppose maintenant que $l>n$ et on écrit $l=n+k$. On pose, pour $0 \leq a \leq b \leq n+k$ :
\[S_{a,b}=\sum_{i=a}^{b} \left[-{n-i \choose i}{l+i \choose 2i+1}+{n-i \choose i-1}{l+i \choose 2i}\right].\]
\noindent On a $S_{0,n+k}=S_{0,n}+S_{n+1,n+k}$.
\\
\\Par le lemme \ref{23}, on a $S_{0,n}=(-1)^{n+1}{n+k \choose n+1}$. Par les lemmes \ref{25} et \ref{28}, la suite $(S_{n+1,n+k})_{n,k}$ vérifie la même relation de récurrence, avec les mêmes conditions initiales, que la suite $\left((-1)^{n}{n+k \choose n+1}\right)$. Donc, $S_{n+1,n+k}=(-1)^{n}{n+k \choose n+1}$. Par conséquent, on a :
\[-\sum_{i=0}^{n+k} {n-i \choose i}{n+k+i \choose 2i+1}+\sum_{i=0}^{n+k} {n-i \choose i-1}{l+i \choose 2i}=(-1)^{n}{n+k \choose n+1}(1-1)=0.\]

\end{proof}

\subsection{Valeurs numériques}
\label{num}

On donne dans le tableau ci-dessous la valeur commune de $\sum_{i=0}^{l} {n-i \choose i}{l+i \choose 2i+1}$ et $\sum_{i=0}^{l} {n-i \choose i-1}{l+i \choose 2i}$ pour différentes valeurs de $n$ et $l$.

\begin{center}
\begin{tabular}{|c|c|c|c|c|c|c|c|c|c|}
\hline
  \multicolumn{1}{|c|}{\backslashbox{$l$}{\vrule width 0pt height 1.25em$n$}} & 1 & 2 & 3 & 4 & 5 & 6 & 7 & 8   \rule[-7pt]{0pt}{18pt} \\
  \hline
  1   & 1 & 1 & 1 & 1 & 1 & 1 & 1 & 1  \rule[-7pt]{0pt}{18pt} \\
	\hline
	  2   & 2 & 3 & 4 & 5 & 6 & 7 & 8 & 9  \rule[-7pt]{0pt}{18pt} \\
	\hline
  3  & 4 & 7 & 11 & 16 & 22 & 29 & 37 & 46   \rule[-7pt]{0pt}{18pt} \\
	\hline
	  4  & 6 & 13 & 24 & 40 & 62 & 91 & 128 & 174  \rule[-7pt]{0pt}{18pt} \\
	\hline
  5  & 9 & 22 & 46 & 86 & 148 & 239 & 367 & 541  \rule[-7pt]{0pt}{18pt} \\
\hline
  6  & 12 & 34 & 80 & 166 & 314 & 553 & 920 & 1461  \rule[-7pt]{0pt}{18pt} \\
\hline
  7  & 16 & 50 & 130 & 296 & 610 & 1163 & 2083 & 3544  \rule[-7pt]{0pt}{18pt} \\
\hline
  8  & 20 & 70 & 200 & 496 & 1106 & 2269 & 4352 & 7896  \rule[-7pt]{0pt}{18pt} \\
\hline
	
\end{tabular}
\end{center}

On pose $n:=7$ et $l:=4$. Pour tout $I \subset \{0,1,2,3,4\}$, on définit les sommes $u_{I}:=\sum_{i \in I} {n-i \choose i}{l+i \choose 2i+1}$ et $v_{I}:=\sum_{i \in I} {n-i \choose i-1}{l+i \choose 2i}$. On constate, en regardant dans les tableaux ci-dessous, que $u_{I}=v_{J}$ si et seulement si $u_{I}=u_{\{0,1,2,3,4\}}$ ou $u_{I}=0$.

\begin{center}
\begin{tabular}{|c|c|c|c|c|c|c|c|c|c|c|c|c|c|c|c|}
\hline
  $I$ & $\{0\}$ & $\{1\}$ & $\{2\}$ & $\{3\}$ & $\{4\}$ & $\{0,1\}$ & $\{0,2\}$ & $\{0,3\}$ & $\{0,4\}$ & $\{1,2\}$ & $\{1,3\}$ & $\{1,4\}$ & $\{2,3\}$ & $\{2,4\}$   \rule[-7pt]{0pt}{18pt} \\
  \hline
  $u_{I}$   & 4 & 60 & 60 & 4 & 0 & 64 & 64 & 8 & 4 & 120 & 64 & 60 & 64 & 60 \rule[-7pt]{0pt}{18pt} \\
	\hline
	$v_{I}$   & 0 & 10 & 75 & 42 & 1 & 10 & 75 & 42 & 1 & 85 & 52 & 11 & 117 & 76 \rule[-7pt]{0pt}{18pt} \\
\hline
	
\end{tabular}
\end{center}

\begin{center}
\begin{tabular}{|c|c|c|c|c|c|c|c|c|c|c|}
\hline
  $I$ & $\{3,4\}$ & $\{0,1,2\}$ & $\{0,1,3\}$ & $\{0,1,4\}$ & $\{0,2,3\}$ & $\{0,2,4\}$ & $\{0,3,4\}$ & $\{1,2,3\}$ & $\{1,2,4\}$ & $\{1,3,4\}$   \rule[-7pt]{0pt}{18pt} \\
  \hline
  $u_{I}$   & 4 & 124 & 68 & 64 & 68 & 64 & 8 & 124 & 120 & 64 \rule[-7pt]{0pt}{18pt} \\
	\hline
	$v_{I}$   & 43 & 85 & 52 & 11 & 117 & 76 & 43 & 127 & 86 & 53  \rule[-7pt]{0pt}{18pt} \\
\hline
	
\end{tabular}
\end{center}

\begin{center}
\begin{tabular}{|c|c|c|c|c|c|c|c|}
\hline
  $I$ & $\{2,3,4\}$ & $\{0,1,2,3\}$ & $\{0,1,2,4\}$ & $\{0,1,3,4\}$ & $\{0,2,3,4\}$ & $\{1,2,3,4\}$ & $\{0,1,2,3,4\}$ \rule[-7pt]{0pt}{18pt} \\
  \hline
  $u_{I}$   & 64 & 128 & 124 & 68 & 68 & 124 & 128   \rule[-7pt]{0pt}{18pt} \\
	\hline
	$v_{I}$   & 118 & 127 & 86 & 53 & 118 & 128 & 128     \rule[-7pt]{0pt}{18pt} \\
\hline
	
\end{tabular}
\end{center}

\newpage

\begin{center}

\textbf{A CURIOUS EQUALITY BETWEEN TWO SUMS OF PRODUCTS OF BINOMIAL COEFFICIENTS}

\end{center}

\begin{abstract}
On va montrer dans ce texte que, pour tous entiers naturels $n$ et $l$, l'égalité suivante est vérifiée : 
\[\sum_{i=0}^{l} {n-i \choose i}{l+i \choose 2i+1}=\sum_{i=0}^{l} {n-i \choose i-1}{l+i \choose 2i}.\]
\noindent On traitera d'abord le cas où $l \leq n$ pour lequel les deux sommes ne contiennent que des coefficients binomiaux classiques. Ensuite, on se placera dans le cadre général en utilisant les coefficients binomiaux généralisés.
\end{abstract}

\selectlanguage{english}
\begin{abstract}

We will show in this text that, for all non-negative integers $n$ and $l$, the following equality is verified:
\[\sum_{i=0}^{l} {n-i \choose i}{l+i \choose 2i+1}=\sum_{i=0}^{l} {n-i \choose i-1}{l+i \choose 2i}.\]
\noindent We will first address the case where $l \leq n$, for which both sums contain only classical binomial coefficients. Then, we will consider the general framework using generalized binomial coefficients.

\end{abstract}

\begin{attentionbox}
This article was originally written in French. The English text shown below is only a translation. To access the original version, go to the page \pageref{00}.
\end{attentionbox}

\section{Introduction}
\label{introbis}

Binomial coefficients appear in many branches of mathematics. As indispensable tools in combinatorics, they arise in a myriad of well-known counting results, such as Sperner's theorem or Cayley's theorem (see for instance \cite{AZbis}). That said, they also accompany objects that may at first seem more remote from counting problems. Binomial coefficients notably appear in the expression of Chebyshev polynomials, in Sergei Bernstein’s proof of the Weierstrass approximation theorem, and also in Leibniz rule for derivatives.
\\
\\ \indent In light of this wide range of applications, it is not surprising that many properties of binomial coefficients have been investigated. Among these, two main families can be distinguished: arithmetic properties and summation formulas. In the first category, one may notably mention Lucas's theorem, which expresses the remainder of the binomial coefficient ${n \choose k}$ upon division by a prime number $p$ in terms of the base-$p$ expansions of the integers $n$ and $k$ (see \cite{Lbis}, Section XXI). As for summation formulas, there is a wide range of results, from Pascal's triangle to the binomial theorem, as well as many more or less famous identities, such as Vandermonde's identity (see \cite{Kbis}, Section 1.2.6 I).
\\
\\ \indent We propose here to follow this second approach, by proving the two formulas below using a proof that aims to be as non-computational as possible. In what follows, we adopt the following standard convention: for all $n \geq 0$, $k \geq 1$ and $0 \leq l < k$, we set ${n \choose -k}={l \choose k}=0$. Note that with this convention, Pascal’s triangle works for binomial coefficients of the form ${m \choose 0}$ and ${m \choose m}$, with $m \geq 1$. Moreover, if $x$ is a real number, we denote by $E[x]$ the integer part of $x$.

\begin{theorembis}
\label{31}

For all non-negative integers $n$ and $0 \leq l \leq n$, the following identity holds:
\[\sum_{i=0}^{l} {n-i \choose i}{l+i \choose 2i+1}=\sum_{i=0}^{l} {n-i \choose i-1}{l+i \choose 2i}.\]

\end{theorembis}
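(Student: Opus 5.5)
The statement \ref{31} is the English restatement of Théorème \ref{11}, so the plan is the continuant argument already used there. Write the identity as the equality of coefficients of $X^{n+1}$ in two products of continuants. Set $M:=M_{n}(X)[M_{2l+1}(X)]^{-1}$, where $M_{k}(X)$ is the $k$-th power of $\begin{pmatrix} X & -1 \\ 1 & 0\end{pmatrix}$. By Proposition \ref{21}, the inverse of $M_{2l+1}(X)$ is $\begin{pmatrix} -K_{2l-1}(X) & K_{2l}(X) \\ -K_{2l}(X) & K_{2l+1}(X)\end{pmatrix}$, because the determinant is $1$. Reading off the top-left entry therefore gives
\[-K_{n}(X)K_{2l-1}(X)+K_{n-1}(X)K_{2l}(X)=\pm K_{|n-2l-1|-\epsilon}(X),\]
where the exact form depends on the sign of $n-2l-1$. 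The right-hand side is $K_{n-2l-1}(X)$ if $2l+1\le n$, and $-K_{2l-n-1}(X)$ otherwise.

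The trivial cases come first. If $l=0$, both sums reduce to a single zero term; this also covers $n=0$.

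For $n,l\ge 1$, I would show that the right-hand side has no term in $X^{n+1}$. In the first case its degree is $n-2l-1\le n$. In the second case its degree is $2l-n-1\le n-1$, and this is exactly where the hypothesis $l\le n$ is used. Hence the coefficients of $X^{n+1}$ in $K_{n}K_{2l-1}$ and in $K_{n-1}K_{2l}$ coincide.

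Next I would compute both coefficients with Proposition \ref{22}. For $K_{n}K_{2l-1}$, pair the degree-$(n-2i)$ coefficient $(-1)^i\binom{n-i}{i}$ of $K_n$ with the degree-$(2i+1)$ coefficient of $K_{2l-1}$. That coefficient is $(-1)^{l-i-1}\binom{l+i}{l-i-1}=(-1)^{l-i-1}\binom{l+i}{2i+1}$. This gives $(-1)^{l+1}\sum_i\binom{n-i}{i}\binom{l+i}{2i+1}$.

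For $K_{n-1}K_{2l}$, pair degree $n-1-2i$ in $K_{n-1}$ with degree $2i+2$ in $K_{2l}$, which gives $(-1)^{l+1}\sum_i\binom{n-1-i}{i}\binom{l+i+1}{2i+2}$. The shift $j=i+1$ turns this into $(-1)^{l+1}\sum_j\binom{n-j}{j-1}\binom{l+j}{2j}$.

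The main obstacle is bookkeeping of the summation ranges, which are bounded by $\min(E[n/2],\,l-1)$ and similar quantities. These must be extended to $0\le i\le l$ by adding zero terms. This is where the conventions $\binom{m}{k}=0$ for $k>m\ge0$ or $k<0$ are needed. It is also where $l\le n$ guarantees $n-i\ge 0$ for all $i$, so that only classical binomial coefficients appear. Once both sums run over $0\le i\le l$, cancelling the common sign $(-1)^{l+1}$ gives the identity.
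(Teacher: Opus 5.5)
Your proposal is correct and follows the paper's proof essentially step for step. You use the same matrix $M_n(X)[M_{2l+1}(X)]^{-1}$ and the same degree argument in the two cases $2l+1\le n$ and $2l+1>n$, with $l\le n$ used exactly there; you then extract the coefficient of $X^{n+1}$ from the explicit expansion of $K_n(X)$, shift the index, and pad both sums with zero terms. Your justification of the padding, via $0\le n-i<i$ once $i$ exceeds the natural range and $i\le l\le n$, is in fact more explicit than the paper's remark about ``modifying the values of $m$ and $r$''.
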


In fact, the condition $l \leq n$ is unnecessary. However, to remove it, one must give a meaning to ${n-i \choose i}$ when $n-i < 0$. We therefore define, for $n < 0$ and $k \in \mathbb{Z}$,
\[{n \choose k}:=\left\{
    \begin{array}{ll}
        (-1)^{k}{k-n-1 \choose k} & \mbox{if } k \geq 0; \\
        0 & \mbox{otherwise}.
    \end{array}
\right.  \\ \]
\noindent This definition may seem surprising at first glance, but it is justified by the identities below:
\[(-1)^{k}{k-n-1 \choose k}=(-1)^{k}\frac{(k-n-1)!}{k!(-n-1)!}=(-1)^{k}\frac{(k-n-1)\ldots(-n)}{k!}=\frac{n(n-1)\ldots(n-k+1)}{k!}.\]
\noindent Moreover, with this convention, Pascal's triangle remains valid for the binomial coefficient ${0 \choose 0}$. Thus, Pascal's triangle holds for all binomial coefficients ${n \choose k}$ with $0 \leq k \leq n$
\\
\\ \indent Equipped with this definition, we can state the result below, which provides a nice formula that, to the best of our knowledge, appears to be new:

\begin{theorembis}
\label{32}

For all non-negative integers $n$ and $l$, the following identity holds:
\[\sum_{i=0}^{l} {n-i \choose i}{l+i \choose 2i+1}=\sum_{i=0}^{l} {n-i \choose i-1}{l+i \choose 2i}.\]

\end{theorembis}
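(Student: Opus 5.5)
The statement is Théorème~\ref{32}, the English restatement of Théorème~\ref{12}, so I will follow the French part of the paper.

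\textbf{Case $l\le n$.} This is exactly Théorème~\ref{31} (i.e.\ Théorème~\ref{11}), proved with the continuant. One compares the coefficient of $X^{n+1}$ on both sides of
\[
K_{n-2l-1}(X)=-K_{n}(X)K_{2l-1}(X)+K_{n-1}(X)K_{2l}(X)
\]
(or its analogue with $K_{2l-n-1}$ when $2l+1>n$), using Proposition~\ref{22}. Nothing more is needed here.

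\textbf{Case $l>n$.} Write $l=n+k$ with $k\ge1$ and move everything to one side. Split the signed sum $S_{0,n+k}$ into two pieces:
\begin{itemize}
\item the \emph{classical} part $S_{0,n}$, with $0\le i\le n$;
\item the \emph{negative} part $S_{n+1,n+k}=v_{k,n}$, whose terms involve ${n-i \choose i}$ with $n-i<0$.
\end{itemize}

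For the classical part, Lemme~\ref{23} gives directly
\[
S_{0,n}=(-1)^{n+1}{n+k \choose n+1}=-u_{k,n}.
\]
It therefore suffices to prove $v_{k,n}=u_{k,n}$ for all $n\ge0$, $k\ge1$.

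To do this, I will show that $v$ and $u$ satisfy the same recursion and the same boundary values.
\begin{itemize}
\item The sequence $u_{k,n}=(-1)^n{n+k\choose n+1}$ satisfies $u_{k+1,n+1}=u_{k,n+1}-u_{k+1,n}$ by Pascal's triangle, with $u_{k,0}=k$ and $u_{1,n}=(-1)^n$.
\item Lemme~\ref{25} gives the same recursion for $v$ when $n\ge0$.
\item Lemme~\ref{28} gives the boundary values $v_{1,n}=(-1)^n$ and $v_{k,0}=k$. Its proof relies on Lemmes~\ref{26} and~\ref{27}, which show $v_{k,h}=0$ for $h\le -1$.
\end{itemize}

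Equality then follows by a double induction. Take the pairs $(k,n)$ ordered by $k+n$, with base cases $k=1$ or $n=0$. For $k+1\ge2$ and $n+1\ge1$, the recursion determines $v_{k+1,n+1}$ from $v_{k,n+1}$ and $v_{k+1,n}$, both of which have smaller $k+n$. Hence $v=u$ everywhere, and $S_{0,n+k}=-u_{k,n}+u_{k,n}=0$, which is the identity.

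\textbf{Main obstacle.} The delicate step is Lemme~\ref{25}, which has already been established. There one must apply the ordinary Pascal triangle to the ${l+i\choose\cdot}$ factors and the negative Pascal triangle (Lemme~\ref{24}) to the ${n-i\choose\cdot}$ factors, while tracking the boundary terms exactly. Given the lemmas, the remaining care in this proof lies in the inductive bookkeeping. I need to check that the recursion plus the boundary lines $k=1$ and $n=0$ cover every pair $(k,n)$ with $k\ge1$, $n\ge0$, which they do.
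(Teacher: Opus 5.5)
Your proposal is correct and is essentially the paper's own proof. Theorem~\ref{31} handles $l\le n$, and for $l=n+k>n$ you use the same split $S_{0,n+k}=S_{0,n}+v_{k,n}$, Lemma~\ref{43} for the classical part, and Lemmas~\ref{45} and~\ref{48} to identify $v_{k,n}$ with $(-1)^n\binom{n+k}{n+1}$ through the same recursion and boundary values, with your induction on $k+n$ simply spelling out the uniqueness step the paper leaves implicit.
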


This identity may appear quite surprising at first glance, on the one hand because of the absence of conditions on $n$ and $l$, and on the other hand because of its almost perfect symmetry. Indeed, by only slightly modifying the lower indices of the binomial coefficients, there was a priori little chance of obtaining an equality. Note that this initial intuition can be further illustrated by considering subsums on each side of the identity, i.e. sums of the form $\sum_{i \in I} {n-i \choose i}{l+i \choose 2i+1}$ and $\sum_{j \in J} {n-j \choose j-1}{l+j \choose 2j}$ with $I,J \subset [\![1;l]\!]$, since in general none of the subsums on the left-hand side is equal to any of the subsums on the right-hand side (apart, of course, from the trivial subsums equal to $0$ or equal to the common value of the original sums). An example of this phenomenon is given in Section \ref{numbis}.

\section{Proof of the formula}

\subsection{The continuant polynomial}

In this subsection, we present a few elements that will be useful in what follows.
\\
\\\indent We begin by defining a well-known polynomial, originally related to continued fractions (see for instance \cite{CObis} for more details). We set $K_{-1}:=0$ and $K_{0}:=1$. Let $n$ be a positive integer and $(a_{1},\ldots,a_{n}) \in \mathbb{C}^{n}$. We denote:
\[K_{n}(a_{1},\ldots,a_{n}):=
\left|
\begin{array}{cccccc}
a_1&1&&&\\[4pt]
1&a_{2}&1&&\\[4pt]
&\ddots&\ddots&\!\!\ddots&\\[4pt]
&&1&a_{n-1}&\!\!\!\!\!1\\[4pt]
&&&\!\!\!\!\!1&\!\!\!\!a_{n}
\end{array}
\right|.\] $K_{n}(a_{1},\ldots,a_{n})$ is the continuant (or negative continuant) of $a_{1},\ldots,a_{n}$. Here, we will only need to consider the case where all the $a_{i}$ are equal, and in this case we write the continuant as $K_{n}(a_{1})$ instead of $K_{n}(a_{1},\ldots,a_{1})$. In this setting, we obtain a single-variable polynomial with integer coefficients, of degree $n$. By expanding the determinant along the first column, one immediately obtains, for all $n \geq 2$, the identity:
\begin{equation}
\label{p}
\tag{$\star$}
K_{n}(X)=X~K_{n-1}(X)-K_{n-2}(X).
\end{equation}
\noindent Note that this identity remains valid when $n=1$.
\\
\\ \indent These polynomials can be used to express the coefficients of certain matrices, which we now make explicit. Let $n$ be a positive integer and $x \in \mathbb{C}$. We set $M_{n}(x):=\begin{pmatrix}
    x & -1 \\
    1  & 0 
   \end{pmatrix}^{n}$. We have the following classical identity:

\begin{proposition}
\label{41}

Let $n$ be a positive integer and $x \in \mathbb{C}$. We have $M_{n}(x)=\begin{pmatrix}
    K_{n}(x) & -K_{n-1}(x) \\
    K_{n-1}(x)  & -K_{n-2}(x)
   \end{pmatrix}$.

\end{proposition}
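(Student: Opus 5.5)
The plan is an induction on $n$, using the recurrence $(\star)$ and the conventions $K_{-1}=0$, $K_{0}=1$. Write $A:=\begin{pmatrix} x & -1 \\ 1 & 0 \end{pmatrix}$, so that $M_{n}(x)=A^{n}$.

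For the base case $n=1$, we need $M_{1}(x)=A$. The right-hand side is $\begin{pmatrix} K_{1}(x) & -K_{0}(x) \\ K_{0}(x) & -K_{-1}(x)\end{pmatrix}$. Since $K_{1}(x)=x$ (a $1\times 1$ determinant), $K_{0}=1$ and $K_{-1}=0$, this matrix is exactly $A$.

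For the inductive step, assume the formula holds at rank $n\geq 1$. We compute $M_{n+1}(x)=M_{n}(x)A$ by plain $2\times 2$ matrix multiplication, using the induction hypothesis for $M_{n}(x)$.

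\begin{itemize}[label=$\circ$]
\item The first column of the product is $\bigl(xK_{n}(x)-K_{n-1}(x),\ xK_{n-1}(x)-K_{n-2}(x)\bigr)^{T}$.
\item The second column is $\bigl(-K_{n}(x),\ -K_{n-1}(x)\bigr)^{T}$.
\end{itemize}

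The identity $(\star)$ applied at ranks $n+1$ and $n$ turns the first column into $\bigl(K_{n+1}(x),\ K_{n}(x)\bigr)^{T}$. This is the claimed matrix at rank $n+1$.

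The only point needing care is that $(\star)$ must be used at rank $n=1$, i.e. $K_{1}=XK_{0}-K_{-1}$, which the paper notes remains valid. This is exactly where the conventions $K_{-1}=0$ and $K_{0}=1$ are needed, so there is no real obstacle.
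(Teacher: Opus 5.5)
Your proof is correct and follows essentially the same route as the paper: induction on $n$, computing $M_{n+1}(x)=M_{n}(x)\begin{pmatrix} x & -1 \\ 1 & 0 \end{pmatrix}$ and closing with the recurrence $(\star)$. Your remark that the step from $n=1$ to $n=2$ relies on $(\star)$ at rank $1$ (via $K_{0}=1$, $K_{-1}=0$) makes explicit a detail the paper leaves implicit.
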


\begin{proof}

We proceed by induction on $n$. If $n=1$, the result is immediate. Suppose that there exists an integer $n \geq 1$ such that the formula holds at rank $n$. We have:
\begin{eqnarray*}
M_{n+1}(x) &=& M_{n}(x)\begin{pmatrix}
    x & -1 \\
    1  & 0 
   \end{pmatrix} \\
	         &=& \begin{pmatrix}
    K_{n}(x) & -K_{n-1}(x) \\
    K_{n-1}(x)  & -K_{n-2}(x)
   \end{pmatrix}\begin{pmatrix}
    x & -1 \\
    1  & 0 
   \end{pmatrix}~~({\rm induction~hypothesis}) \\
	         &=& \begin{pmatrix}
    xK_{n}(x)-K_{n-1}(x) & -K_{n}(x) \\
    xK_{n-1}(x)-K_{n-2}(x)  & -K_{n-1}(x)
   \end{pmatrix} \\
	         &=& \begin{pmatrix}
    K_{n+1}(x) & -K_{n}(x) \\
    K_{n}(x) & -K_{n-1}(x)
   \end{pmatrix}~~({\rm equality~(\star)}).
\end{eqnarray*}

\end{proof}

In the same way as the polynomials $K_{n}(x)$, the matrices $M_{n}(x)$ are also particular cases. One can, of course, consider the more general matrices below:
\[M_{n}(a_1,\ldots,a_n):=\begin{pmatrix}
    a_{n} & -1 \\
    1  & 0 
   \end{pmatrix}\ldots\begin{pmatrix}
    a_{1} & -1 \\
    1  & 0 
   \end{pmatrix}=\begin{pmatrix}
    K_{n}(a_{1},\ldots,a_{n}) & -K_{n-1}(a_{2},\ldots,a_{n}) \\
    K_{n-1}(a_{1},\ldots,a_{n})  & -K_{n-2}(a_{2},\ldots,a_{n-1})
   \end{pmatrix}.\]
\noindent These matrices have many applications. For instance, one can show that for every $A$ in $SL_{2}(\mathbb{Z})$, there exist positive integers $n,a_{1},\ldots,a_{n}$ such that $A=M_{n}(a_{1},\ldots,a_{n})$. Since this representation is not unique, one is led to study the different representations of a given matrix, as well as the number of such representations of fixed size (see \cite{M1bis,M2bis,Obis}).
\\
\\\indent We now seek an expression in the form of a sum involving $K_{n}(X)$. To obtain it, we may use the Chebyshev polynomials of the second kind. These form a family $(U_{n})$ of polynomials in $\mathbb{Z}[X]$ satisfying, for all non-negative integer $n$, $U_{n+2}(X)=2X U_{n+1}(X)-U_{n}(X)$, with $U_{0}(X)=1$ and $U_{1}(X)=2X$. Using identity ($\star$), one observes that $K_{n}(2X)$ satisfies these same conditions. Consequently, for all $n \geq 0$, we have $K_{n}(X)=U_{n}\left(\frac{X}{2}\right)$. Using the classical expression of the polynomials $U_{n}$, we obtain the expression of $K_{n}(X)$ stated in the proposition below. However, here we aim to provide a complete proof of Theorem \ref{11}. We therefore give a detailed proof of the desired identity.

\begin{proposition}
\label{42}

Let $n \geq 0$, $K_{n}(X)=\sum_{k=0}^{E[\frac{n}{2}]} (-1)^{k}\binom{n-k}{k}X^{n-2k}$.

\end{proposition}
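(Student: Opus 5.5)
The plan is a two-step strong induction on $n$, driven by the recurrence ($\star$): $K_{n+1}(X)=XK_{n}(X)-K_{n-1}(X)$, valid for all $n\geq 0$ with $K_{-1}=0$ and $K_{0}=1$.

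\textbf{Base cases.} For $n=0$ the formula gives $\binom{0}{0}X^{0}=1=K_{0}$. For $n=1$, since $E[\frac{1}{2}]=0$, it gives $\binom{1}{0}X=X$, which equals $K_{1}(X)$ (the $1\times 1$ determinant $X$, or ($\star$) with $K_{-1}=0$).

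\textbf{Inductive step.} Assume the formula holds at ranks $n$ and $n-1$, and expand $XK_{n}(X)-K_{n-1}(X)$ using it.
\begin{itemize}
\item The first product gives the terms $(-1)^{k}\binom{n-k}{k}X^{n+1-2k}$.
\item In the second sum, shift the index $k=l-1$. Each term becomes $-(-1)^{l-1}\binom{n-l}{l-1}X^{n+1-2l}=(-1)^{l}\binom{n-l}{l-1}X^{n+1-2l}$.
\item The two sums now carry the same monomials $X^{n+1-2k}$. Pascal's rule $\binom{n-k}{k}+\binom{n-k}{k-1}=\binom{n+1-k}{k}$ merges them into $\sum_k(-1)^{k}\binom{n+1-k}{k}X^{n+1-2k}$, which is the claimed formula at rank $n+1$.
\end{itemize}
For $k=0$, the convention $\binom{n}{-1}=0$ makes Pascal's rule still hold, so the term $X^{n+1}$ needs no separate treatment.

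\textbf{Main obstacle: summation bounds.} Only the ranges need care, and I would split on the parity of $n$.
\begin{itemize}
\item If $n$ is even, $E[\frac{n}{2}]=\frac{n}{2}$, $E[\frac{n-1}{2}]=\frac{n}{2}-1$ and $E[\frac{n+1}{2}]=\frac{n}{2}$. After the shift, both sums run over $1\le k\le \frac{n}{2}$, apart from the isolated $k=0$ term $X^{n+1}$, and the target range is $0\le k\le\frac{n}{2}$.
\item If $n$ is odd, $E[\frac{n}{2}]=\frac{n-1}{2}$ while the shifted second sum reaches $k=\frac{n+1}{2}=E[\frac{n+1}{2}]$. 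The first sum then lacks a top term $k=\frac{n+1}{2}$. That missing term is $\binom{n-k}{k}=\binom{\frac{n-1}{2}}{\frac{n+1}{2}}=0$ by the stated convention $\binom{l}{k}=0$ for $0\le l<k$, so Pascal's rule still applies there: it gives $\binom{\frac{n+1}{2}}{\frac{n+1}{2}}=1=\binom{\frac{n-1}{2}}{\frac{n-1}{2}}$, which matches.
\end{itemize}

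With both parities handled, the induction closes and the formula holds for all $n\geq 0$. An alternative would be to quote the classical expansion of $U_{n}$ together with $K_{n}(X)=U_{n}(X/2)$, but the direct induction is self-contained.
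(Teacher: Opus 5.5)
Your proof is correct and follows the same route as the paper: a two-step induction on $n$ via ($\star$), the index shift $l=k+1$ in the $K_{n-1}$ sum, and Pascal's triangle, with the summation bounds handled by parity. The only difference is that you also write out the odd case, where the missing top term $\binom{\frac{n-1}{2}}{\frac{n+1}{2}}=0$ is supplied by the convention; the paper only says that case is analogous.
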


\begin{proof}

This can be proved by induction on $n$. Indeed, the formula holds for $n=0$ and for $n=1$. Assume that there exists a positive integer $n$ such that the formula holds for $n$ and $n-1$. We suppose that $n$ is even. Using ($\star$), we obtain:
\begin{eqnarray*}
K_{n+1}(X) &=& XK_{n}(X)-K_{n-1}(X) \\
                    &=& \sum_{k=0}^{E[\frac{n}{2}]} (-1)^{k}\binom{n-k}{k}X^{n+1-2k}-\sum_{k=0}^{E[\frac{n-1}{2}]} (-1)^{k}\binom{n-1-k}{k}X^{n-1-2k} \\
										&=&  \sum_{k=0}^{\frac{n}{2}} (-1)^{k}\binom{n-k}{k}X^{n+1-2k}-\sum_{k=0}^{\frac{n}{2}-1} (-1)^{k}\binom{n-1-k}{k}X^{n-1-2k}~({\rm since}~n~{\rm is~even}) \\
										&=&  \sum_{k=0}^{\frac{n}{2}} (-1)^{k}\binom{n-k}{k}X^{n+1-2k}-\sum_{l=1}^{\frac{n}{2}} (-1)^{l-1}\binom{n-l}{l-1}X^{n+1-2l} \\ 
										&=&  \sum_{k=0}^{\frac{n}{2}} (-1)^{k}\binom{n-k}{k}X^{n+1-2k}+\sum_{l=1}^{\frac{n}{2}} (-1)^{l}\binom{n-l}{l-1}X^{n+1-2l} \\
										&=&  X^{n+1}+\sum_{k=1}^{\frac{n}{2}} (-1)^{k}\left(\binom{n-k}{k}+\binom{n-k}{k-1}\right)X^{n+1-2k} \\
										&=&  X^{n+1}+\sum_{k=1}^{\frac{n}{2}} (-1)^{k}\binom{n+1-k}{k}X^{n+1-2k}~({\rm Pascal's~triangle}) \\
										&=&  \sum_{k=0}^{\frac{n}{2}} (-1)^{k}\binom{n+1-k}{k}X^{n+1-2k}. \\
\end{eqnarray*}

\noindent The case where $n$ is odd is treated in a similar manner. This completes the proof of the formula by induction.

\end{proof}
  
\subsection{Proof of the first theorem}

We now prove the identity announced in the introduction.

\begin{proof}[Proof of theorem \ref{31}]

If $l=0$, then both sums consist of a single term equal to $0$. If $n=0$, then $l=0$ and the equality holds. We now assume that $n,l \geq 1$ and distinguish two cases.
\\
\\i) We first assume that $2l+1 \leq n$, i.e. $1 \leq l \leq \frac{n-1}{2}$. We set
$m := {\rm min}\left(E\left[\frac{n}{2}\right], E\left[\frac{2l-1}{2}\right]\right)$ and $r := {\rm min}\left(E\left[\frac{n-1}{2}\right], l\right)$. We then have $m=l-1$ and $r=l$. By Proposition \ref{41}, we obtain the two following equalities:
\begin{eqnarray*}
M &:=& M_{n}(X)[M_{2l+1}(X)]^{-1} \\
  &=& M_{n-2l-1}(X) \\
	&=& \begin{pmatrix}
    K_{n-2l-1}(X) & -K_{n-2l-2}(X) \\
    K_{n-2l-2}(X)  & -K_{n-2l-3}(X)
   \end{pmatrix}.
\end{eqnarray*}
\begin{eqnarray*}
M &=& M_{n}(X)[M_{2l+1}(X)]^{-1} \\
  &=& \begin{pmatrix}
    K_{n}(X) & -K_{n-1}(X) \\
    K_{n-1}(X)  & -K_{n-2}(X)
   \end{pmatrix}\begin{pmatrix}
    -K_{2l-1}(X) & K_{2l}(X) \\
    -K_{2l}(X)  & K_{2l+1}(X)
   \end{pmatrix} \\
	&=& \begin{pmatrix}
    -K_{n}(X)K_{2l-1}(X)+K_{n-1}(X)K_{2l}(X) & \ldots \\
    \ldots  & \ldots
   \end{pmatrix}.
\end{eqnarray*}

\noindent Consequently, we obtain $K_{n-2l-1}(X)=-K_{n}(X)K_{2l-1}(X)+K_{n-1}(X)K_{2l}(X)$.
\\
\\Since $n-2l-1 \leq n$, the polynomial $K_{n-2l-1}(X)$ has no term of degree $n+1$. Thus, the coefficient of degree $n+1$ in $K_{n}(X)K_{2l-1}(X)$, denoted by $u$, is equal to the coefficient of degree $n+1$ in $K_{n-1}(X)K_{2l}(X)$, denoted by $v$.
\\
\\The integer $u$ is the sum of the products of the coefficient of degree $n-2i$ in $K_{n}(X)$ and the coefficient of degree $2i+1$ in $K_{2l-1}(X)$, for all $0 \leq i \leq m$. By Proposition \ref{42}, we obtain:
\begin{eqnarray*}
u &=& \sum_{i=0}^{m} (-1)^{i}{n-i \choose i}(-1)^{l-i-1}{2l-1-(l-i-1) \choose l-i-1} \\
  &=& (-1)^{l+1}\sum_{i=0}^{l-1} {n-i \choose i}{l+i \choose l+i-(l-i-1)} \\
	&=& (-1)^{l+1}\sum_{i=0}^{l-1} {n-i \choose i}{l+i \choose 2i+1} \\
	&=& (-1)^{l+1}\sum_{i=0}^{l} {n-i \choose i}{l+i \choose 2i+1}.
\end{eqnarray*}
\noindent The integer $v$ is the sum of the products of the coefficient of degree $n-1-2i$ in $K_{n-1}(X)$ and the coefficient of degree $2i+2$ in $K_{2l}(X)$, for all $0 \leq i \leq {\rm min}(r,l-1)$. By Proposition \ref{42}, we obtain:
\begin{eqnarray*}
v &=& \sum_{i=0}^{{\rm min}(r,l-1)} (-1)^{i}{n-1-i \choose i}(-1)^{l-i-1}{2l-(l-i-1) \choose l-i-1} \\
  &=& (-1)^{l+1}\sum_{i=0}^{l-1} {n-(i+1) \choose i}{l+(i+1) \choose l+i+1-(l-i-1)} \\
  &=& (-1)^{l+1}\sum_{i=0}^{l-1} {n-(i+1) \choose i}{l+(i+1) \choose 2(i+1)} \\
	&=& (-1)^{l+1}\sum_{j=1}^{l} {n-j \choose j-1}{l+j \choose 2j} \\
	&=& (-1)^{l+1}\sum_{i=0}^{l} {n-i \choose i-1}{l+i \choose 2i}.
\end{eqnarray*}

\noindent Since $u=v$, we have $(-1)^{l+1}u = (-1)^{l+1}v$, and therefore $\sum_{i=0}^{l} {n-i \choose i}{l+i \choose 2i+1}=\sum_{i=0}^{l} {n-i \choose i-1}{l+i \choose 2i}$.
\\
\\ii) We now assume that $2l+1>n$, i.e. $\frac{n}{2} \leq l \leq n$. We consider again the matrix $M:=M_{n}(X)[M_{2l+1}(X)]^{-1}$. We have the identity $M=M_{2l+1-n}(X)^{-1}=\begin{pmatrix}
    -K_{2l-n-1}(X) & K_{2l-n}(X) \\
    -K_{2l-n}(X)  & K_{2l+1-n}(X)
   \end{pmatrix}$. Thus, we obtain:
	\[-K_{2l-n-1}(X)=-K_{n}(X)K_{2l-1}(X)+K_{n-1}(X)K_{2l}(X).\] 
\noindent The polynomial $-K_{2l-n-1}(X)$ has degree $2l-n-1$. However, since $l \leq n$, we have $2l-n-1 \leq n-1 < n+1$, and therefore $K_{2l-n-1}(X)$ has no term of degree $n+1$. We may thus proceed as in i), with appropriate modifications of the values of $m$ and $r$. To obtain the desired formula, it remains only to add zero terms to each of the sums.

\end{proof}

\subsection{Proof of the general formula}

The formula of Theorem \ref{31} is pleasant, and its proof requires relatively few computations. However, in order for our identity to take its optimal form, the condition on $l$ must be removed. Unfortunately, this cannot be achieved solely by relying on the continuant polynomial. Indeed, one could define a rational expression of the form $\sum_{i=0}^{l} (-1)^{i}{n-i \choose i}X^{n-2i}$, but this is not, a priori, related to any matrix. We are therefore led to adopt a more computational approach. To carry this out, we will prove several intermediate results which, when combined, will yield the desired formula.

\begin{lemma}
\label{43}

Let $l>n$ be two non-negative integers. The following equality holds:
\[\sum_{i=0}^{n} \left[-{n-i \choose i}{l+i \choose 2i+1}+{n-i \choose i-1}{l+i \choose 2i}\right]=(-1)^{n+1}{l \choose n+1}.\]

\end{lemma}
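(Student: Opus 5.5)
The plan is to reuse the matrix identity from the proof of Théorème \ref{11}, case ii), now in the regime $l>n$. Set $M:=M_{n}(X)[M_{2l+1}(X)]^{-1}$. Since $2l+1>n$, Proposition \ref{41} gives
\[M=M_{2l+1-n}(X)^{-1}=\begin{pmatrix} -K_{2l-n-1}(X) & K_{2l-n}(X) \\ -K_{2l-n}(X) & K_{2l+1-n}(X)\end{pmatrix}.\]
Computing $M$ instead as the product of $M_{n}(X)$ with the explicit inverse of $M_{2l+1}(X)$ (also from Proposition \ref{41}) gives, for the top-left entry,
\[-K_{2l-n-1}(X)=-K_{n}(X)K_{2l-1}(X)+K_{n-1}(X)K_{2l}(X).\]
I then compare the coefficients of $X^{n+1}$ on both sides. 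The difference from Théorème \ref{11} is that here $2l-n-1\geq n+1$ can hold, so the left side no longer vanishes in that degree; it is exactly what produces the right-hand side $(-1)^{n+1}{l \choose n+1}$.

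On the right, I expand with Proposition \ref{22}, exactly as for $u$ and $v$ in the proof of Théorème \ref{11}. The coefficient of $X^{n+1}$ in $K_{n}K_{2l-1}$ pairs degree $n-2i$ of $K_{n}$ with degree $2i+1$ of $K_{2l-1}$. The constraint from $K_{n}$ is now $i\leq E[n/2]$, and since $l>n$ the constraint from $K_{2l-1}$ ($i\leq l-1$) is automatically satisfied. The same reindexing as before gives $(-1)^{l+1}\sum_{i}{n-i \choose i}{l+i \choose 2i+1}$. Similarly, $K_{n-1}K_{2l}$ contributes $(-1)^{l+1}\sum_{i}{n-i \choose i-1}{l+i \choose 2i}$, summed over $i\le E[(n-1)/2]+1$. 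Both index ranges lie inside $[0,n]$, and the terms outside them vanish by the convention ${a \choose b}=0$ for $0\leq a<b$. So I extend both sums to $0\leq i\leq n$; every binomial appearing there is a classical one, since $n-i\geq 0$. The right side then has $X^{n+1}$-coefficient
\[(-1)^{l+1}\sum_{i=0}^{n}\left[-{n-i \choose i}{l+i \choose 2i+1}+{n-i \choose i-1}{l+i \choose 2i}\right].\]

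On the left, I use Proposition \ref{22} once more. Write $N:=2l-n-1$, so that $n+1=N-2k$ with $k=l-n-1\geq 0$. The coefficient of $X^{n+1}$ in $-K_{N}(X)$ is then
\[-(-1)^{l-n-1}{N-k \choose k}=(-1)^{l-n}{l \choose l-n-1}=(-1)^{l-n}{l \choose n+1}.\]
The exponent is admissible because $k\leq E[N/2]$. Noting that $(-1)^{l-n}=(-1)^{l+1}(-1)^{n+1}$, I equate the two sides and divide by $(-1)^{l+1}$, which gives the claim.

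The main obstacle is the bookkeeping of index ranges and signs. I need to check that the truncations $i\leq E[n/2]$ and $i\leq E[(n-1)/2]+1$ cover every nonzero term and that extending to $i\leq n$ adds only zeros. I also need to handle the boundary term $i=0$ of the second sum, where ${n \choose -1}=0$, and the degenerate case $n=0$. In that case $K_{n-1}=K_{-1}=0$, and I would check the statement directly: both sides reduce to $-{l \choose 1}=-l$.
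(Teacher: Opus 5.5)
Your proposal is correct and is essentially the paper's own proof. It uses the same identity $-K_{2l-n-1}(X)=-K_{n}(X)K_{2l-1}(X)+K_{n-1}(X)K_{2l}(X)$, obtained from $M_{n}(X)[M_{2l+1}(X)]^{-1}=M_{2l+1-n}(X)^{-1}$, the same comparison of $X^{n+1}$-coefficients via the explicit expansion of the continuants, and the same padding with zero terms up to $i=n$; your index-range bookkeeping and separate check of $n=0$ (not covered by the matrix proposition, which is stated for $n\geq 1$) are slightly more careful than the paper's version.
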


\begin{proof}

Let $l>n$ be two non-negative integers. We have $E\left[\frac{n}{2}\right] \leq E\left[\frac{n-1}{2}\right]+1$. We set $\tilde{m} := E\left[\frac{n-1}{2}\right]+1$. As before, we consider the matrix $M := M_{n}(X)[M_{2l+1}(X)]^{-1}$. We have the identity:
\[M=M_{2l+1-n}(X)^{-1}=\begin{pmatrix}
    -K_{2l-n-1}(X) & K_{2l-n}(X) \\
    -K_{2l-n}(X)  & K_{2l+1-n}(X)
   \end{pmatrix}.\]
\noindent We therefore obtain $-K_{2l-n-1}(X)=-K_{n}(X)K_{2l-1}(X)+K_{n-1}(X)K_{2l}(X)$. The coefficient of degree $n+1$ of the right-hand side polynomial is $(-1)^{l+1}\sum_{i=0}^{\tilde{m}} \left[-{n-i \choose i}{l+i \choose 2i+1}+{n-i \choose i-1}{l+i \choose 2i}\right]$. By adding zero terms, this expression becomes:
\[(-1)^{l+1}\sum_{i=0}^{n} \left[-{n-i \choose i}{l+i \choose 2i+1}+{n-i \choose i-1}{l+i \choose 2i}\right].\]

\noindent Moreover, the coefficient of degree $n+1$ of $-K_{2l-n-1}(X)$ is $(-1)^{l-n}{l \choose l-n-1}=(-1)^{l+1}\left[(-1)^{n+1}{l \choose n+1}\right]$. We thus obtain the desired result.

\end{proof}

We will use the notation detailed below throughout what follows. Let $n$ be an integer and $k$ a positive integer. We set:
\begin{itemize}[label=$\circ$]
\item $u_{k,n}:=(-1)^{n}{n+k \choose n+1}$;
\item $v_{k,n}:=\sum_{i=n+1}^{n+k} \left[-{n-i \choose i}{n+k+i \choose 2i+1}+{n-i \choose i-1}{n+k+i \choose 2i}\right]$.
\end{itemize}
To prove the identity in Theorem \ref{32}, it is therefore enough to show that, for all $n \geq 0$ and $k \geq 1$, $u_{k,n}=v_{k,n}$. This does not appear obvious at first glance. However, the left-hand side has a particularly simple form. We can therefore begin by focusing on it. Using the Pascal's triangle, we immediately obtain the recurrence relation $u_{k+1,n+1}=u_{k,n+1}-u_{k+1,n}$, together with the initial conditions $u_{k,0}=k$ for all $k \geq 1$ and $u_{1,n}=(-1)^{n}$ for all $n \geq 0$. If we show that the sequence $v$ satisfies the same conditions, then the desired equality will follow. This is therefore the task we now undertake.
\\
\\\indent One of the most useful properties when dealing with binomial coefficients is the Pascal's triangle. However, this relation is only valid for classical binomial coefficients. Here, we provide an analogue for their negative counterparts.

\begin{lemmabis}[\og Negative Pascal's triangle \fg]
\label{44}

Let $n<0$ and $i \geq 1$. The following equality holds:
\[{n \choose i}={n+1 \choose i}-{n \choose i-1}.\]

\end{lemmabis}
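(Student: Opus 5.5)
We prove the negative Pascal triangle directly from the definition of ${n \choose k}$ for $n<0$. The strategy is to rewrite each generalized coefficient as a signed classical coefficient with nonnegative upper index, and then apply the ordinary Pascal triangle once.

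Since $n<0$ and $i\geq 1$, the definition gives
\[{n \choose i}=(-1)^{i}{i-n-1 \choose i}.\]
Here $i-n-1\geq i\geq 1$, so this is a classical coefficient with $0\le i\le i-n-1$. Pascal's triangle is valid in this range, as noted in the introduction, including the boundary cases ${m\choose 0}$ and ${m \choose m}$. We therefore split
\[{i-n-1 \choose i}={i-n-2 \choose i}+{i-n-2 \choose i-1}.\]
Both upper indices equal $i-n-2\geq i-1\geq 0$, so both terms are legitimate classical coefficients. The first may be $0$ if $i-n-2<i$, which is allowed by the stated convention.

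It remains to identify the two terms.

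\textbf{Second term.} Since $n<0$ and $i-1\geq 0$, the definition gives
\[{n \choose i-1}=(-1)^{i-1}{(i-1)-n-1 \choose i-1}=(-1)^{i-1}{i-n-2 \choose i-1}.\]
Hence $(-1)^{i}{i-n-2 \choose i-1}=-{n \choose i-1}$.

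\textbf{First term.} We must show $(-1)^{i}{i-n-2 \choose i}={n+1 \choose i}$, distinguishing two cases.
\begin{itemize}
\item If $n+1<0$, this is exactly the definition applied to $n+1$, since $i-(n+1)-1=i-n-2$.
\item If $n+1=0$, then ${0 \choose i}=0$ because $i\geq 1$, by the convention ${l \choose k}=0$ for $0\le l<k$. On the other side, ${i-1 \choose i}=0$ by the same convention. So the identity still holds.
\end{itemize}

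Combining the two terms yields ${n \choose i}={n+1 \choose i}-{n \choose i-1}$. The only delicate point is the boundary case $n=-1$, where ${n+1 \choose i}$ switches from the negative definition to the classical convention; the case check above handles it.
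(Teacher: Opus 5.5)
Your proof is correct and follows the paper's route: you rewrite ${n \choose i}$ as $(-1)^{i}{i-n-1 \choose i}$, split it once with the ordinary Pascal triangle, and identify the two pieces through the definition of negative binomial coefficients. Your explicit check of the boundary case $n=-1$ (where ${n+1 \choose i}={0 \choose i}$ falls under the classical convention and both sides vanish) is a detail the paper leaves implicit.
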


\begin{proof}

By the Pascal's triangle, we have:
\begin{eqnarray*} 
{n \choose i} &=& (-1)^{i}{i-n-1 \choose i} \\
              &=& (-1)^{i}{i-(n+1)-1 \choose i}+(-1)^{i}{i-n-2 \choose i-1} \\
							&=& {n+1 \choose i}-(-1)^{i-1}{(i-1)-n-1 \choose i-1} \\
							&=& {n+1 \choose i}-{n \choose i-1}.
\end{eqnarray*}

\end{proof}

\begin{lemmabis}
\label{45}

Let $n \geq 0$ and $k \geq 1$. We have $v_{k+1,n+1}=v_{k,n+1}-v_{k+1,n}$.

\end{lemmabis}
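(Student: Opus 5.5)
The plan is to expand $v_{k+1,n+1}$ with the ordinary Pascal triangle applied to the second factor, split it into $v_{k,n+1}$ plus two leftover sums, and then show, using the negative Pascal triangle (Lemma \ref{44}) on the first factor, that the leftovers add up to $-v_{k+1,n}$.

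\textbf{Step 1: expand $v_{k+1,n+1}$.} By definition,
\[v_{k+1,n+1}=\sum_{i=n+2}^{n+k+2}\left[-{n+1-i \choose i}{n+k+2+i \choose 2i+1}+{n+1-i \choose i-1}{n+k+2+i \choose 2i}\right].\]
Here $n+k+2+i\geq 0$ and the lower indices are non-negative, so the classical Pascal triangle applies to the second factors. It gives ${n+k+2+i \choose 2i+1}={n+k+1+i \choose 2i+1}+{n+k+1+i \choose 2i}$, and similarly for ${n+k+2+i \choose 2i}$.

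\textbf{Step 2: isolate $v_{k,n+1}$.} The terms with $i\leq n+k+1$ whose second factors are ${n+k+1+i \choose 2i+1}$ and ${n+k+1+i \choose 2i}$ form exactly $v_{k,n+1}$. For the top index $i=n+k+2$, most binomials vanish: e.g. ${2n+2k+3 \choose 2n+2k+5}=0$. What survives is a single boundary term of the form ${-k-1 \choose n+k+1}$. There remain two sums over $n+2\leq i\leq n+k+1$:
\[\alpha=-\sum {n+1-i \choose i}{n+k+1+i \choose 2i}, \qquad \beta=\sum {n+1-i \choose i-1}{n+k+1+i \choose 2i-1}.\]

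\textbf{Step 3: rewrite $\beta$ and $\alpha$.} In $\beta$, shift $j=i-1$. This turns ${n+1-i \choose i-1}$ into ${n-j \choose j}$ and the second factor into ${n+k+j+2 \choose 2j+1}$. Apply the classical Pascal triangle once more to that second factor, peeling off the boundary term $j=n+1$, which is ${-1 \choose n+1}{2n+k+3 \choose 2n+3}$.

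In $\alpha$, the first factor ${n+1-i \choose i}$ has a negative upper index, so Lemma \ref{44} applies in the form ${n+1-i \choose i}={n-i \choose i}+{n-i \choose i-1}$. This splits $\alpha$ into pieces matching the summands of $v_{k+1,n}$, again with a boundary term of the shape ${-k \choose n+k+1}$.

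\textbf{Step 4: recombine.} Set $\gamma:=v_{k+1,n+1}-v_{k,n+1}$. Then:
\begin{itemize}[label=$\circ$]
\item The middle sums from $\beta$ and $\alpha$ should cancel pairwise, namely the ${n-j \choose j}{n+k+j+1 \choose 2j}$ terms.
\item What survives should be $\sum_i\left[{n-i \choose i}{n+k+1+i \choose 2i+1}-{n-i \choose i-1}{n+k+1+i \choose 2i}\right]$ over part of the range, together with boundary terms.
\item The boundary terms should fill in the missing indices $i=n+1$ and $i=n+k+1$, giving exactly $-v_{k+1,n}$.
\end{itemize}
For the boundary terms I expect to use Lemma \ref{44} once more to write ${-k-1 \choose n+k+1}={-k \choose n+k+1}-{-k-1 \choose n+k}$, so that the ${-k \choose n+k+1}$ pieces cancel. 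I also expect to use ${-1 \choose n+1}=-{-1 \choose n}$, so that the $i=n+1$ contributions recombine by Pascal: ${2n+k+2 \choose 2n+3}+{2n+k+2 \choose 2n+2}$.

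\textbf{Main obstacle.} The hard part is this boundary bookkeeping in Step 4: checking that the leftover extreme-index terms from each Pascal expansion combine precisely into the $i=n+1$ and $i=n+k+1$ terms of $-v_{k+1,n}$. I will verify each of them with the explicit sign convention ${m \choose k}=(-1)^k{k-m-1 \choose k}$ for $m<0$.
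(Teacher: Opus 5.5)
Your proposal is correct and follows the paper's proof step for step. The shared steps are: Pascal on the second factors, the split $v_{k+1,n+1}={-k-1 \choose n+k+1}+v_{k,n+1}+\alpha+\beta$, the shift $j=i-1$ in $\beta$ with boundary term ${-1 \choose n+1}{2n+k+3 \choose 2n+3}$, Lemma \ref{44} on $\alpha$, and the closing identities ${-k-1 \choose n+k+1}={-k \choose n+k+1}-{-k-1 \choose n+k}$ and ${-1 \choose n+1}=-{-1 \choose n}$. The bookkeeping you flag does close as expected: $-{-k-1 \choose n+k}$ is exactly the $i=n+k+1$ summand of $-v_{k+1,n}$, and ${-1 \choose n+1}{2n+k+3 \choose 2n+3}$ is exactly its $i=n+1$ summand.
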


\begin{proof}

With Pascal's triangle, we have the equalities below:\begin{eqnarray*}
v_{k+1,n+1} &=& \sum_{i=n+2}^{n+k+2} -{n+1-i \choose i}\left({n+k+1+i \choose 2i+1}+{n+k+1+i \choose 2i}\right) \\
            & & +{n+1-i \choose i-1}\left({n+k+1+i \choose 2i}+{n+k+1+i \choose 2i-1}\right) \\
            &=& {-k-1 \choose n+k+1}+\sum_{i=n+2}^{n+k+1} -{n+1-i \choose i}\left({n+k+1+i \choose 2i+1}+{n+k+1+i \choose 2i}\right) \\
						& & +{n+1-i \choose i-1}\left({n+k+1+i \choose 2i}+{n+k+1+i \choose 2i-1}\right) \\
						&=& {-k-1 \choose n+k+1}+\sum_{i=n+2}^{n+k+1} \left[-{n+1-i \choose i}{n+k+1+i \choose 2i+1}+{n+1-i \choose i-1}{n+k+1+i \choose 2i}\right] \\
						& & +\sum_{i=n+2}^{n+k+1} \left[-{n+1-i \choose i}{n+k+1+i \choose 2i}+{n+1-i \choose i-1}{n+k+1+i \choose 2i-1}\right]  \\
						&=& {-k-1 \choose n+k+1}+v_{k,n+1}+\underbrace{\sum_{i=n+2}^{n+k+1} -{n+1-i \choose i}{n+k+1+i \choose 2i}}_{\alpha}+\underbrace{\sum_{i=n+2}^{n+k+1} {n+1-i \choose i-1}{n+k+1+i \choose 2i-1}}_{\beta}.
\end{eqnarray*}

\noindent Setting $j:=i-1$ and using Pascal's triangle, we obtain:
\begin{eqnarray*}
\beta &=& \sum_{j=n+1}^{n+k} {n-j \choose j}{n+k+j+2 \choose 2j+1} \\
      &=& {-1 \choose n+1}{2n+k+3 \choose 2n+3}+\sum_{j=n+2}^{n+k} {n-j \choose j}{n+k+j+1 \choose 2j+1}+\sum_{j=n+2}^{n+k} {n-j \choose j}{n+k+j+1 \choose 2j}.
\end{eqnarray*}

\noindent Using the negative Pascal's triangle, we have:
\[\alpha=-{-k \choose n+k+1}-\sum_{i=n+2}^{n+k} {n-i \choose i}{n+k+1+i \choose 2i}-\sum_{i=n+2}^{n+k} {n-i \choose i-1}{n+k+1+i \choose 2i}.\]

\noindent Using the negative Pascal's triangle once again, we obtain:
\[{-k-1 \choose n+k+1}={-k \choose n+k+1}-{-k-1 \choose n+k}.\]

\noindent Consequently, we have, setting $\gamma:=v_{k+1,n+1}-v_{k,n+1}$ and using that ${-1 \choose n+1}=-{-1 \choose n}$:
\begin{eqnarray*}
\gamma &=& {-1 \choose n+1}{2n+k+3 \choose 2n+3}-{-k-1 \choose n+k}+\sum_{i=n+2}^{n+k} {n-i \choose i}{n+k+i+1 \choose 2i+1}-{n-i \choose i-1}{n+k+1+i \choose 2i} \\
       &=& {-1 \choose n+1}\left({2n+k+2 \choose 2n+3}+{2n+k+2 \choose 2n+2}\right)+\sum_{i=n+2}^{n+k+1} {n-i \choose i}{n+k+i+1 \choose 2i+1}-{n-i \choose i-1}{n+k+1+i \choose 2i} \\
			 &=& \sum_{i=n+1}^{n+k+1} {n-i \choose i}{n+k+i+1 \choose 2i+1}-{n-i \choose i-1}{n+k+1+i \choose 2i} \\
			 &=& -v_{k+1,n}.
\end{eqnarray*}
\end{proof}

\begin{lemmabis}
\label{46}

Let $k \geq 1$. We have $v_{k+1,0}=1+v_{k,0}-v_{k+1,-1}$.

\end{lemmabis}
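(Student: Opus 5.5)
We follow the same scheme as the proof of Lemma \ref{45}, now with $n=0$. We start from
\[v_{k+1,0}=\sum_{i=1}^{k+1}\left[-{-i \choose i}{k+1+i \choose 2i+1}+{-i \choose i-1}{k+1+i \choose 2i}\right].\]
In each term we apply Pascal's triangle to the second factor: ${k+1+i \choose 2i+1}={k+i \choose 2i+1}+{k+i \choose 2i}$ and ${k+1+i \choose 2i}={k+i \choose 2i}+{k+i \choose 2i-1}$.

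The term $i=k+1$ is handled separately. There ${k+i \choose 2i+1}$ and ${k+i \choose 2i}$ vanish, so that term reduces to ${-k-1 \choose k}$. The parts with ${k+i \choose 2i+1}$ and ${k+i \choose 2i}$ for $1\le i\le k$ then reassemble exactly into $v_{k,0}$. This leaves
\[v_{k+1,0}={-k-1 \choose k}+v_{k,0}+\alpha'+\beta',\qquad \alpha'=-\sum_{i=1}^{k}{-i \choose i}{k+i \choose 2i},\quad \beta'=\sum_{i=1}^{k}{-i \choose i-1}{k+i \choose 2i-1}.\]

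In $\beta'$ we set $j=i-1$ and get $\sum_{j=0}^{k-1}{-1-j \choose j}{k+j+1 \choose 2j+1}$. The term $j=0$ is $(k+1)$. For $j\ge1$ we apply Pascal's triangle once more to ${k+j+1 \choose 2j+1}$.

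In $\alpha'$ we apply the negative Pascal triangle (Lemma \ref{44}) in the form ${-i \choose i}={-i+1 \choose i}\ldots$, or more precisely ${-1-i \choose i}={-i \choose i}-{-1-i \choose i-1}$. This rewrites ${-i \choose i}$ in terms of ${-1-i \choose i}$ and ${-1-i \choose i-1}$. The index shift is designed to make the upper entries match those in $v_{k+1,-1}=\sum_{i=0}^{k}\left[-{-1-i \choose i}{k+i \choose 2i+1}+{-1-i \choose i-1}{k+i \choose 2i}\right]$. The boundary term $i=k$ of $\alpha'$ gives $-{-k \choose k}$.

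Collecting terms, the pieces ${-1-i \choose i}{k+i \choose 2i}$ coming from $\alpha'$ and from $\beta'$ cancel. What remains is ${-k-1 \choose k}-{-k \choose k}+(k+1)$ plus a sum of the form $\sum_{i=1}^{k-1}\left[{-1-i \choose i}{k+i \choose 2i+1}-{-1-i \choose i-1}{k+i \choose 2i}\right]$. We then use Lemma \ref{44} once more: ${-k-1 \choose k}-{-k \choose k}=-{-k-1 \choose k-1}$. This is exactly minus the term $i=k$ of $v_{k+1,-1}$, since ${2k \choose 2k+1}=0$.

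The term $i=0$ of $-v_{k+1,-1}$ equals ${-1 \choose 0}{k \choose 1}=k$. Hence $(k+1)=1+k$ supplies the constant $1$ together with the missing $i=0$ term. So the remaining expression is $1-v_{k+1,-1}$, which proves the identity.

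The main obstacle is the bookkeeping at the boundary indices $i=0$, $i=k$ and $i=k+1$. There the conventions for negative and vanishing binomial coefficients must be checked carefully, in particular ${-1 \choose -1}=0$ and ${2k \choose 2k+1}=0$. This step is where the extra constant $1$, which is absent from Lemma \ref{45}, appears: it comes from the term $j=0$ of $\beta'$, whose binomial ${-1 \choose 0}$ is not paired as it was in the case $n\ge0$.
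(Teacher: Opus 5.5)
Your proposal is correct and follows the paper's proof essentially step for step. It uses the same splitting $v_{k+1,0}=\binom{-k-1}{k}+v_{k,0}+\alpha'+\beta'$, the same shift $j=i-1$ in $\beta'$ isolating the term $k+1$, and the same application of the negative Pascal triangle to $\alpha'$ with the boundary term $-\binom{-k}{k}$ kept aside. It closes the same way too, through $\binom{-k-1}{k}-\binom{-k}{k}=-\binom{-k-1}{k-1}$ (Lemma \ref{44}) and the $i=0$ term $k$ of $-v_{k+1,-1}$. Just delete the abandoned fragment $\binom{-i}{i}=\binom{-i+1}{i}\ldots$; the identity you actually use, $\binom{-1-i}{i}=\binom{-i}{i}-\binom{-1-i}{i-1}$, is the right one.
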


\begin{proof}

The proof uses essentially the same ingredients as the previous one, so we will omit some details. Using Pascal's triangle, we have:
\begin{eqnarray*}
v_{k+1,0} &=& \sum_{i=1}^{k+1} \left[-{-i \choose i}{k+1+i \choose 2i+1}+{-i \choose i-1}{k+1+i \choose 2i}\right] \\
          &=& {-k-1 \choose k}+v_{k,0}+\underbrace{\sum_{i=1}^{k} -{-i \choose i}{k+i \choose 2i}}_{\alpha'}+\underbrace{\sum_{i=1}^{k} {-i \choose i-1}{k+i \choose 2i-1}}_{\beta'}.
\end{eqnarray*}
\[\beta'=\sum_{j=0}^{k-1} {-1-j \choose j}{k+j+1 \choose 2j+1}=(k+1)+\sum_{j=1}^{k-1} {-1-j \choose j}{k+j \choose 2j+1}+\sum_{j=1}^{k-1} {-1-j \choose j}{k+j \choose 2j}.\]
\[\alpha'=-{-k \choose k}-\sum_{i=1}^{k-1} {-1-i \choose i}{k+i \choose 2i}-\sum_{i=1}^{k-1} {-1-i \choose i-1}{k+i \choose 2i}.\]

\noindent Hence, we obtain:
\begin{eqnarray*}
v_{k+1,0} &=& {-k-1 \choose k}+v_{k,0}+(k+1)-{-k \choose k}+\sum_{i=1}^{k-1} -{-1-i \choose i-1}{k+i \choose 2i}+{-1-i \choose i}{k+i \choose 2i+1} \\
          &=& 1+v_{k,0}-{-k-1 \choose k-1}+\sum_{i=0}^{k-1} -{-1-i \choose i-1}{k+i \choose 2i}+{-1-i \choose i}{k+i \choose 2i+1} \\
					&=& 1+v_{k,0}-v_{k+1,-1}.
\end{eqnarray*}
\end{proof}

\begin{lemmabis}
\label{47}

Let $k \geq 1$ and $n \leq -1$. We have $v_{k+1,n}=v_{k,n}-v_{k+1,n-1}$.

\end{lemmabis}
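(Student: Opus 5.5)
Fix $k\geq 1$ and $n\leq -1$. I split according to the position of $n$ relative to $-k$, because the summation range $i\in[\![n+1;n+k]\!]$ contains negative values of $i$ when $n$ is very negative.

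\textbf{Trivial regime.} First take $n\leq -k-2$. Then every index $i$ in $v_{k+1,n}$, $v_{k,n}$ and $v_{k+1,n-1}$ satisfies $i\leq n+k+1\leq -1$. So every term contains a coefficient ${\cdot \choose i}$ or ${\cdot \choose i-1}$ with negative lower entry, which vanishes by convention. Hence all three quantities are $0$ and the identity holds.

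\textbf{Boundary cases.} Next, for $n=-k$ and $n=-k-1$, the sums contain only a couple of terms with $i\in\{-1,0,1\}$. I will evaluate them directly. For instance, with $i=0$ the term is $-{n \choose 0}{n+k \choose 1}$, which vanishes when $n+k=0$. One should find that all three values vanish, so the relation again reads $0=0-0$.

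\textbf{Main case $-k+1\leq n\leq -1$.} Here $n+k\geq 1$, and I follow exactly the pattern of Lemme \ref{25} and Lemme \ref{26}.
\begin{enumerate}
\item Apply Pascal's triangle to the upper-index factors ${n+k+1+i \choose 2i+1}$ and ${n+k+1+i \choose 2i}$ in $v_{k+1,n}$. This is legitimate since these are classical coefficients with $n+k+1+i\geq 0$ on the relevant range.
\item Isolate the top term $i=n+k+1$, which contributes ${-k-1 \choose n+k}$. Recognise $v_{k,n}$ among the pieces, leaving two residual sums $\alpha''$ and $\beta''$.
\item In $\beta''$, shift $j=i-1$ and apply Pascal again to the upper factor.
\item In $\alpha''$, apply Lemme \ref{24} (the negative Pascal triangle) to ${n-i \choose i}$, since $n-i<0$.
\item Combine the boundary terms ${-k-1 \choose n+k}-{-k \choose n+k}$ using Lemme \ref{24} once more; this gives $-{-k-1 \choose n+k-1}$.
\item Observe that this boundary term, together with the surviving sums, is exactly $-v_{k+1,n-1}$ after re-indexing.
\end{enumerate}

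\textbf{Expected obstacle.} The delicate point is bookkeeping in this last case. One must check that Pascal's triangle is applicable at the edge indices, including $i=0$ and the coefficients with upper entry $0$. One must also check the extreme indices where the lower entry equals the upper entry, or where $i-1<0$. Finally, the low-index terms of the residual sums ($i$ from $0$ up to $n$) must match the extra terms of $v_{k+1,n-1}$, which ranges over $i$ from $n$ to $n+k$. I would verify these edge terms individually, using that coefficients with negative lower entry are $0$.
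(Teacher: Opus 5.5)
Your proposal is correct and follows the paper's proof essentially step for step. It uses the same three-case split ($n\leq -k-2$, $n\in\{-k,-k-1\}$, $-k+1\leq n\leq -1$). In the main case it isolates the same top term ${-k-1 \choose n+k}$, treats the same residual sums $\alpha''$ and $\beta''$ with Pascal's triangle and Lemma \ref{44}, and makes the same combination ${-k-1 \choose n+k}-{-k \choose n+k}=-{-k-1 \choose n+k-1}$, which becomes the $i=n+k$ term of $-v_{k+1,n-1}$.

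Two points should be stated precisely. In $\alpha''$ you need Lemma \ref{44} in the form ${n-i \choose i}={n-i-1 \choose i}+{n-i-1 \choose i-1}$, which lowers the upper entry to match $v_{k+1,n-1}$. And in $v_{k+1,n-1}$ the terms with $n\leq i\leq -1$ simply vanish.
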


\begin{proof}

We distinguish three cases. If $n \leq -k-2$, then $v_{k+1,n}=v_{k,n}=v_{k+1,n-1}=0$, since all the terms in these sums are multiples of a binomial coefficient whose lower index is negative. If $n = -k$ or $n=-k-1$, then a direct computation gives $v_{k+1,n}=v_{k,n}=v_{k+1,n-1}=0$. We now assume that $-k+1 \leq n \leq -1$. In particular, we have $n+k \geq 1$. The proof again relies on arguments analogous to those already used.
\begin{eqnarray*}
v_{k+1,n} &=& \sum_{i=0}^{n+k+1} \left[-{n-i \choose i}{n+k+1+i \choose 2i+1}+{n-i \choose i-1}{n+k+1+i \choose 2i}\right] \\
          &=& {-k-1 \choose n+k}+v_{k,n}+\underbrace{\sum_{i=0}^{n+k} -{n-i \choose i}{n+k+i \choose 2i}}_{\alpha''}+\underbrace{\sum_{i=1}^{n+k} {n-i \choose i-1}{n+k+i \choose 2i-1}}_{\beta''}.
\end{eqnarray*}

\[\beta''=\sum_{j=0}^{n+k-1} {n-j-1 \choose j}{n+k+j+1 \choose 2j+1}=\sum_{j=0}^{n+k-1} {n-1-j \choose j}{n+k+j \choose 2j+1}+\sum_{j=0}^{n+k-1} {n-1-j \choose j}{n+k+j \choose 2j}.\]
\[\alpha''=-{-k \choose n+k}-\sum_{i=0}^{n+k-1} {n-i-1 \choose i}{n+k+i \choose 2i}-\sum_{i=0}^{n+k-1} {n-i-1 \choose i-1}{n+k+i \choose 2i}.\]

\noindent Thus, we have:
\begin{eqnarray*}
v_{k+1,n} &=& v_{k,n}+{-k-1 \choose n+k}-{-k \choose n+k}+\sum_{i=0}^{n+k-1} {n-i-1 \choose i}{n+k+i \choose 2i+1}-{n-i-1 \choose i-1}{n+k+i \choose 2i} \\
          &=& -{-k-1 \choose n+k-1}+v_{k,n}+\sum_{i=n}^{n+k-1} {n-i-1 \choose i}{n+k+i \choose 2i+1}-{n-i-1 \choose i-1}{n+k+i \choose 2i} \\
          &=& v_{k,n}-v_{k+1,n-1}.
\end{eqnarray*}
\end{proof}

\begin{lemma}
\label{48}

Let $n \geq 0$ and $k \geq 1$. We have $v_{1,n}=(-1)^{n}$ and $v_{k,0}=k$.

\end{lemma}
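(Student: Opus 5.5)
The two values are handled separately. The first is a direct computation from the definition of $v_{k,n}$; the second uses Lemma \ref{46} after showing that the correction term $v_{k+1,-1}$ vanishes, which requires an auxiliary induction based on Lemma \ref{47}.

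\emph{The value $v_{1,n}$.} For $k=1$ the sum defining $v_{1,n}$ has the single index $i=n+1$, so
\[v_{1,n}=-{-1 \choose n+1}{2n+2 \choose 2n+3}+{-1 \choose n}{2n+2 \choose 2n+2}.\]
By the convention ${l \choose k}=0$ for $0 \leq l<k$, the first product is zero. The second binomial coefficient equals $1$. By the definition of negative binomial coefficients, ${-1 \choose n}=(-1)^{n}{n \choose n}=(-1)^{n}$. Hence $v_{1,n}=(-1)^{n}$.

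\emph{The value $v_{k,0}$.} By Lemma \ref{46}, $v_{k+1,0}=1+v_{k,0}-v_{k+1,-1}$. Once we know that $v_{k+1,-1}=0$, a straightforward induction on $k$ starting from $v_{1,0}=1$ (the first part with $n=0$) gives $v_{k,0}=k$. So the real work is the auxiliary claim that $v_{k,h}=0$ for all $k \geq 1$ and all $h \leq -1$. I would prove it by induction on $k$.

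\emph{Base case $k=1$.} For $h \leq -1$, $v_{1,h}$ has the single index $i=h+1 \leq 0$. If $h \leq -2$, both lower indices $2i+1$ and $2i$ are negative, so every term vanishes. If $h=-1$, then $i=0$ and a direct evaluation gives
\[v_{1,-1}=-{-1 \choose 0}{0 \choose 1}+{-1 \choose -1}{0 \choose 0}=0,\]
since ${0 \choose 1}=0$ and ${-1 \choose -1}=0$ by the convention for negative lower index.

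\emph{Inductive step.} Assume $v_{k,h}=0$ for all $h \leq -1$. For $h \leq -k-2$, every index $i$ in the sum defining $v_{k+1,h}$ satisfies $i \leq h+k+1 \leq -1$. The lower indices $2i+1$ and $2i$ are then negative, so $v_{k+1,h}=0$. For $h$ running upward from $-k-1$ to $-1$, I apply Lemma \ref{47}:
\[v_{k+1,h}=v_{k,h}-v_{k+1,h-1}.\]
The first term vanishes by the inductive hypothesis, and the second vanishes by the previous step of this upward sweep (starting from $v_{k+1,-k-2}=0$). This gives $v_{k+1,h}=0$ for all $h \leq -1$.

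The main obstacle is this vanishing claim. It requires checking that Lemma \ref{47} applies across the whole range $-k-1 \leq h \leq -1$, which is fine since it holds for all $n \leq -1$. It also requires care with the conventions for binomial coefficients with negative entries in the boundary cases.
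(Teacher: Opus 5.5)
Your proof is correct and matches the paper's argument essentially step for step. Both compute $v_{1,n}$ directly from its single term. Both then show $v_{k,h}=0$ for all $h \leq -1$ by induction on $k$, sweeping upward from $h=-k-2$ with Lemma \ref{47}, and finish with Lemma \ref{46}, which gives $v_{k+1,0}=1+v_{k,0}$. Your explicit evaluation of $v_{1,-1}$ and your justification of the vanishing for $h \leq -k-2$ simply spell out details the paper leaves brief.
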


\begin{proof}

We consider each of the two values separately.
\begin{itemize}[label=$\circ$]
\item We have $v_{1,n}=-{-1 \choose n+1}{2n+2 \choose 2n+3}+{-1 \choose n}{2n+2 \choose 2n+2}={-1 \choose n}=(-1)^{n}$.
\\
\item We prove by induction on $k$ that, for all $h \leq -1$, $v_{k,h}=0$. For all $h \leq -2$, $v_{1,h}=0$ since the terms in the sum are all multiples of a binomial coefficient whose lower index is negative. Moreover, $v_{1,-1}=0$. Assume that there exists $k \geq 1$ such that, for all $h \leq -1$, $v_{k,h}=0$. Let $h \leq -1$. If $h \leq -k-2$, then $v_{k+1,h}=0$, since the terms in the sum are all multiples of a binomial coefficient whose lower index is negative. Suppose now that $h \geq -k-1$. By Lemma \ref{47}, we have the identity $v_{k+1,-k-1}=v_{k,-k-1}-v_{k+1,-k-2}$. Now, $v_{k,-k-1} 0$ by the induction hypothesis, and $v_{k+1,-k-2}=0$. Hence $v_{k+1,-k-1}=0$. Again by Lemma \ref{47}, $v_{k+1,-k}=v_{k,-k}-v_{k+1,-k-1}=0$. Proceeding step by step, we obtain $v_{k,h}=0$.
\\
\\Thus, by Lemma \ref{46}, $v_{k+1,0}=1+v_{k,0}$. We prove by induction that, for all $m \geq 1$, $v_{m,0}=m$. If $m=1$, then $v_{1,0}=(-1)^0=1$. Assume there exists an integer $m \geq 1$ such that $v_{m,0}=m$. Then we have $v_{m+1,0}=1+v_{m,0}=m+1$. By induction, the result follows.

\end{itemize}

\end{proof}

\noindent We can now carry out the proof of our formula.

\begin{proof}[Proof of theorem \ref{32}]

Let $n$ and $l$ be two non-negative integers. If $0 \leq l \leq n$, then by Theorem \ref{31}, the formula holds. We now assume that $l > n$ and write $l = n + k$. We set, for $0 \leq a \leq b \leq n + k$:
\[S_{a,b}=\sum_{i=a}^{b} \left[-{n-i \choose i}{l+i \choose 2i+1}+{n-i \choose i-1}{l+i \choose 2i}\right].\]
\noindent We have $S_{0,n+k}=S_{0,n}+S_{n+1,n+k}$.
\\
\\By Lemma \ref{43}, we have $S_{0,n} = (-1)^{n+1}{n+k \choose n+1}$. By Lemmas \ref{45} and \ref{48}, the sequence $(S_{n+1,n+k}){n,k}$ satisfies the same recurrence relation, with the same initial conditions, as the sequence $\left((-1)^{n}{n+k \choose n+1}\right)$. Hence, $S{n+1,n+k}=(-1)^{n}{n+k \choose n+1}$. Therefore, we obtain:
\[-\sum_{i=0}^{n+k} {n-i \choose i}{n+k+i \choose 2i+1}+\sum_{i=0}^{n+k} {n-i \choose i-1}{l+i \choose 2i}=(-1)^{n}{n+k \choose n+1}(1-1)=0.\]

\end{proof}

\subsection{Numerical values}
\label{numbis}

We give in the table below the common value of $\sum_{i=0}^{l} {n-i \choose i}{l+i \choose 2i+1}$ and $\sum_{i=0}^{l} {n-i \choose i-1}{l+i \choose 2i}$ for different values of $n$ and $l$.

\begin{center}
\begin{tabular}{|c|c|c|c|c|c|c|c|c|c|}
\hline
  \multicolumn{1}{|c|}{\backslashbox{$l$}{\vrule width 0pt height 1.25em$n$}} & 1 & 2 & 3 & 4 & 5 & 6 & 7 & 8   \rule[-7pt]{0pt}{18pt} \\
  \hline
  1   & 1 & 1 & 1 & 1 & 1 & 1 & 1 & 1  \rule[-7pt]{0pt}{18pt} \\
	\hline
	  2   & 2 & 3 & 4 & 5 & 6 & 7 & 8 & 9  \rule[-7pt]{0pt}{18pt} \\
	\hline
  3  & 4 & 7 & 11 & 16 & 22 & 29 & 37 & 46   \rule[-7pt]{0pt}{18pt} \\
	\hline
	  4  & 6 & 13 & 24 & 40 & 62 & 91 & 128 & 174  \rule[-7pt]{0pt}{18pt} \\
	\hline
  5  & 9 & 22 & 46 & 86 & 148 & 239 & 367 & 541  \rule[-7pt]{0pt}{18pt} \\
\hline
  6  & 12 & 34 & 80 & 166 & 314 & 553 & 920 & 1461  \rule[-7pt]{0pt}{18pt} \\
\hline
  7  & 16 & 50 & 130 & 296 & 610 & 1163 & 2083 & 3544  \rule[-7pt]{0pt}{18pt} \\
\hline
  8  & 20 & 70 & 200 & 496 & 1106 & 2269 & 4352 & 7896  \rule[-7pt]{0pt}{18pt} \\
\hline
	
\end{tabular}
\end{center}

We set $n:=7$ and $l:=4$. For all $I \subset \{0,1,2,3,4\}$, we define the sums $u_{I}:=\sum_{i \in I} {n-i \choose i}{l+i \choose 2i+1}$ and $v_{I}:=\sum_{i \in I} {n-i \choose i-1}{l+i \choose 2i}$. We observe, by looking at the tables below, that $u_{I}=v_{J}$ if and only if $u_{I}=u_{\{0,1,2,3,4}\}$ or $u_{I}=0$.

\begin{center}
\begin{tabular}{|c|c|c|c|c|c|c|c|c|c|c|c|c|c|c|c|}
\hline
  $I$ & $\{0\}$ & $\{1\}$ & $\{2\}$ & $\{3\}$ & $\{4\}$ & $\{0,1\}$ & $\{0,2\}$ & $\{0,3\}$ & $\{0,4\}$ & $\{1,2\}$ & $\{1,3\}$ & $\{1,4\}$ & $\{2,3\}$ & $\{2,4\}$   \rule[-7pt]{0pt}{18pt} \\
  \hline
  $u_{I}$   & 4 & 60 & 60 & 4 & 0 & 64 & 64 & 8 & 4 & 120 & 64 & 60 & 64 & 60 \rule[-7pt]{0pt}{18pt} \\
	\hline
	$v_{I}$   & 0 & 10 & 75 & 42 & 1 & 10 & 75 & 42 & 1 & 85 & 52 & 11 & 117 & 76 \rule[-7pt]{0pt}{18pt} \\
\hline
	
\end{tabular}
\end{center}

\begin{center}
\begin{tabular}{|c|c|c|c|c|c|c|c|c|c|c|}
\hline
  $I$ & $\{3,4\}$ & $\{0,1,2\}$ & $\{0,1,3\}$ & $\{0,1,4\}$ & $\{0,2,3\}$ & $\{0,2,4\}$ & $\{0,3,4\}$ & $\{1,2,3\}$ & $\{1,2,4\}$ & $\{1,3,4\}$   \rule[-7pt]{0pt}{18pt} \\
  \hline
  $u_{I}$   & 4 & 124 & 68 & 64 & 68 & 64 & 8 & 124 & 120 & 64 \rule[-7pt]{0pt}{18pt} \\
	\hline
	$v_{I}$   & 43 & 85 & 52 & 11 & 117 & 76 & 43 & 127 & 86 & 53  \rule[-7pt]{0pt}{18pt} \\
\hline
	
\end{tabular}
\end{center}

\begin{center}
\begin{tabular}{|c|c|c|c|c|c|c|c|}
\hline
  $I$ & $\{2,3,4\}$ & $\{0,1,2,3\}$ & $\{0,1,2,4\}$ & $\{0,1,3,4\}$ & $\{0,2,3,4\}$ & $\{1,2,3,4\}$ & $\{0,1,2,3,4\}$ \rule[-7pt]{0pt}{18pt} \\
  \hline
  $u_{I}$   & 64 & 128 & 124 & 68 & 68 & 124 & 128   \rule[-7pt]{0pt}{18pt} \\
	\hline
	$v_{I}$   & 118 & 127 & 86 & 53 & 118 & 128 & 128     \rule[-7pt]{0pt}{18pt} \\
\hline
	
\end{tabular}
\end{center}


\begin{thebibliography}{99}

\bibitem{AZ}
M. Aigner, G. Ziegler,
{\it Proofs from THE BOOK}, Second edition,
Springer-Verlag Berlin Heidelberg GmbH, 2001.

\bibitem{CO}
C.~Conley, V.~Ovsienko, 
{\it Rotundus: triangulations, Chebyshev polynomials, and Pfaffians}, 
The Mathematical Intelligencer, Vol. 40 no. 3, (2018), pp 45-50.

\bibitem{K} 
D. E. Knuth,
{\it The Art of Computer Programming: Volume 1: Fundamental Algorithms (3rd ed.)}, 
Addison-Wesley Professional, 1997.

\bibitem{L}
É. Lucas,
{\it Théorie des Fonctions Numériques Simplement Périodiques},
American Journal of Mathematics, Vol. 1 no. 3, (1878), pp 197-240.

\bibitem{M1}
F. Mabilat,
\textit{Quelques éléments de combinatoire des matrices de $SL(2,\mathbb{Z})$}, 
Bulletin des Sciences Mathématiques, Vol. 167, Article 102958, (2021), https://doi.org/10.1016/j.bulsci.2021.102958.

\bibitem{M2}
F. Mabilat,
\textit{Éléments de comptage sur les générateurs du groupe modulaire et les $\lambda$-quiddités}, (2025), arXiv:2502.01328.

\bibitem{O} 
V. Ovsienko, 
{\it Partitions of unity in $SL(2,\mathbb{Z})$,  negative continued fractions,  and dissections of polygons,} 
Research in the Mathematical Sciences, Vol. 5 no. 2, (2018), Article 21, 25 pp.


\end{thebibliography}

\begin{thebibliography}{99}

\bibitem{AZbis}
M. Aigner, G. Ziegler,
{\it Proofs from THE BOOK}, Second edition,
Springer-Verlag Berlin Heidelberg GmbH, 2001.

\bibitem{CObis}
C.~Conley, V.~Ovsienko, 
{\it Rotundus: triangulations, Chebyshev polynomials, and Pfaffians}, 
The Mathematical Intelligencer, Vol. 40 no. 3, (2018), pp 45-50.

\bibitem{Kbis} 
D. E. Knuth,
{\it The Art of Computer Programming: Volume 1: Fundamental Algorithms (3rd ed.)}, 
Addison-Wesley Professional, 1997.

\bibitem{Lbis}
É. Lucas,
{\it Théorie des Fonctions Numériques Simplement Périodiques},
American Journal of Mathematics, Vol. 1 no. 3, (1878), pp 197-240.

\bibitem{M1bis}
F. Mabilat,
\textit{Quelques éléments de combinatoire des matrices de $SL(2,\mathbb{Z})$}, 
Bulletin des Sciences Mathématiques, Vol. 167, Article 102958, (2021), https://doi.org/10.1016/j.bulsci.2021.102958.

\bibitem{M2bis}
F. Mabilat,
\textit{Éléments de comptage sur les générateurs du groupe modulaire et les $\lambda$-quiddités}, (2025), arXiv:2502.01328.

\bibitem{Obis} 
V. Ovsienko, 
{\it Partitions of unity in $SL(2,\mathbb{Z})$,  negative continued fractions,  and dissections of polygons,} 
Research in the Mathematical Sciences, Vol. 5 no. 2, (2018), Article 21, 25 pp.




\end{thebibliography}
\end{document}